\newtheorem{thm}{Theorem}[section]
\newtheorem{lem}{Lemma}[section]
\newtheorem{prop}{Proposition}[section]
\theoremstyle{remark}
\numberwithin{equation}{section}
\newcommand\CC{\mathbb{C}}
\newcommand\NN{\mathbb{N}}
\newcommand\RR{\mathbb{R}}
\renewcommand\SS{\mathbb{S}}
\newcommand\proj{\operatorname{proj}}
\newcommand\supp{\operatorname{supp}}
\newcommand\esssup{\operatorname*{ess\,sup}}
\newcommand\wh{\widehat}
\def\hb{\hfill$\Box$}
\newcommand\Cdot{\mathop\cdot}
\def\og{\omega}
\newcommand\HH{\mathcal{H}}
\def\sub{\substack}
\def\sph{\mathbb{S}^{d-1}}
\def\f{\frac}
\def\Bl{\Bigl} \def\Br{\Bigr}
 \def\bl{\bigl}
\def\br{\bigr}
\def\({\left(}
\def \){ \right)}
\def\sa{\sigma}
 \def\a{{\alpha}}
 \def\b{{\beta}}
 \def\t{{\theta}}
 \def\l{{\lambda}}
 \def\ld{\lambda}
 \def\o{{\omega}}
 \def\s{{\sigma}}
 \def\va{\varepsilon}
 \def\BB{{\mathbb B}}
 \def\CC{{\mathbb C}}
 \def\NN{{\mathbb N}}
 \def\RR{{\mathbb R}}
  \def\SS{{\mathbb S}}
 \def\al{\a}
 \def\proj{\operatorname{proj}}
\def\hb{\hfill$\Box$}
\def\sph{\mathbb{S}^{d-1}}
\def\s{\sa}
\def\ga{\gamma}
\def\Ga{\Gamma}
\def\be{\b}
\def\HH{\mathcal{H}}
\begin{document}

\title[]{ Nikolskii constants for  polynomials \\ on the unit sphere}

\author{Feng Dai}
\address{F.~Dai, Department of Mathematical and Statistical Sciences\\
University of Alberta\\ Edmonton, Alberta T6G 2G1, Canada.}
\email{fdai@ualberta.ca}

\author{Dmitry~Gorbachev}
\address{D.~Gorbachev, Tula State University,
Department of Applied Mathematics and Computer Science, 300012 Tula, Russia}
\email{dvgmail@mail.ru}

\author{Sergey Tikhonov} \address{
S.~Tikhonov,
Centre de Recerca Matem\`{a}tica\\
Campus de Bellaterra, Edifici~C 08193 Bellaterra (Barcelona), Spain; ICREA, Pg.
Llu\'{i}s Companys 23, 08010 Barcelona, Spain, and Universitat Aut\`{o}noma de
Barcelona.} \email{stikhonov@crm.cat}

\thanks{F. ~D. was   supported
 by  NSERC  Canada under the
grant  RGPIN 04702 Dai.  D.~G. was supported by the RFBR (no.~16-01-00308) and the Ministry of
Education and Science of the Russian Federation (no.~5414GZ).
S.~T. was partially supported by  MTM 2014-59174-P and 2014 SGR 289.
}

\keywords{spherical harmonics, polynomial inequalities, sharp constant}

\subjclass[2010]{33C55, 33C50, 42B15, 42C10}

\begin{abstract}
  This paper
   studies
 the asymptotic
behavior of  the exact   constants of the Nikolskii inequalities  for the space $\Pi_n^d$ of  spherical polynomials  of degree at most $n$ on the unit sphere $\SS^d\subset \RR^{d+1}$ as $n\to\infty$.  It is shown that
 for $0<p<\infty$,
\[
\lim_{n\to \infty}  \sup\Bl\{\f{\|P\|_{L^\infty(\SS^d)}}{n^{\f dp}\|P\|_{L^p(\SS^d)}}:\  \ P\in\Pi_n^d\Br\} =\sup\Bl\{
\f{\|f\|_{L^\infty(\RR^{d})}}{\|f\|_{L^p(\RR^d)}}:\   \ f\in\mathcal{E}_p^d \Br\},   \]
where  $\mathcal{E}_p^d$ denotes  the space  of  all entire functions  of
spherical exponential type at most  $1$ whose restrictions to $\RR^d$  belong to  the space $L^p(\RR^d)$, and it is agreed that $0/0=0$.  It is further proved that  for  $0<p<q<\infty$,
\[
\liminf_{n\to \infty}  \sup\Bl\{\f{\|P\|_{L^q(\SS^d)}}{n^{d(1/p-1/q)}\|P\|_{L^p(\SS^d)}}:\  \ P\in\Pi_n^d\Br\} \ge \sup\Bl\{
\f{\|f\|_{L^q(\RR^{d})}}{\|f\|_{L^p(\RR^d)}}:\   \ f\in\mathcal{E}_p^d\Br\}.   \]
These  results extend the recent    results of Levin and
Lubinsky for trigonometric polynomials on the unit circle.

 The paper also determines   the   exact value of the  Nikolskii constant   for nonnegative functions with  $p=1$ and $q=\infty$:
$$\lim_{n\to \infty}  \sup_{0\leq P\in\Pi_n^d}\f{\|P\|_{L^\infty(\SS^d)}}{\|P\|_{L^1(\SS^d)}} =\sup_{0\leq f\in\mathcal{E}_1^d}\f{\|f\|_{L^\infty(\RR^{d})}}{\|f\|_{L^1(\RR^d)}}
  =\frac1{4^d \pi^{d/2}\Ga(d/2+1)}.$$

\end{abstract}

\date{August 7, 2017}
\maketitle
%
%

\section{Introduction}

Let $\SS^{d}=\{x\in \mathbb{R}^{d+1}\colon |x|=1\}$ denote the unit sphere of
$\RR^{d+1}$ equipped  with the usual surface Lebesgue  measure $d\sigma(x)$, and  $\omega_d$  the  surface area 
of the sphere $\SS^{d}$; that is,  $\omega_{d}:=\s(\SS^d)=2\pi^{\frac{d+1}{2}}/\Gamma(\frac{d+1}{2})$.
 Here, $|\Cdot|$ denotes the Euclidean norm of $\RR^{d+1}$.
 Given $0<p\le \infty$, we denote by $L^p(\SS^{d})$ the
usual Lebesgue $L^p$-space defined with respect to the measure $d\sigma(x)$ on
$\SS^{d}$, and $\|\Cdot\|_p=\|\Cdot\|_{L^{p}(\SS^{d})}$ the quasi-norm of
$L^p(\SS^{d})$; that is,
\[
\|f\|_{p}=\left(\int_{\SS^{d}}|f(x)|^{p}\,d\sigma(x)\right)^{1/p},\quad
0<p<\infty,\qquad \|f\|_{\infty}=\esssup_{x\in \SS^{d}}|f(x)|.
\]
Let $\rho(x,y):=\arccos{}(x\Cdot y)$ denote the geodesic distance between $x,
y\in\SS^d$.
We will use the letter $e$  to denote   the vector  $(0,\dots,0,1)\in\SS^d $.  The notation $ A\asymp B$
means that there exists a positive  constant $c$, called the constant of
equivalence, such that $ c^{-1} A \le B \le c A.$

Let $\Pi_n^d$ denote the space of all spherical polynomials of
degree at most $n$ on $\SS^{d}$ (i.e., restrictions on $\SS^{d}$ of polynomials
in $d+1$ variables of total degree at most $n$), and $\HH_n^d$ the space of all
spherical harmonics of degree $n$ on $\SS^{d}$. As is well known (see, e.g., \cite[Chap.~1]{BOOK}),   both $\HH_n^d$ and $\Pi_n^d$ are  finite
dimensional spaces with
\[
\dim
\HH_n^d=\frac{2n+d-1}{d-1}\,\frac{\Gamma(n+d-1)}
{\Gamma(n+1)\Gamma(d-1)}=
\frac{2n^{d-1}}{\Gamma(d)}\,\bigl(1+ O(n^{-1})\bigr)
\]
and
\begin{equation}\label{dimension}
\dim \Pi_n^d=\frac{(2n+d)\Gamma(n+d)}{\Gamma(n+1)\Gamma(d+1)}=
\frac{2n^{d}}{\Gamma(d+1)}\,\bigl(1+O(n^{-1})\bigr)
\end{equation}
as $n\to\infty$.

 The spaces $\HH_k^d$ are mutually orthogonal with respect to the
inner product of $L^2(\SS^{d})$, and    the orthogonal projection $\proj_k$ of $L^2(\SS^{d})$ onto the space $\HH_k^d$ can be expressed as a spherical convolution:
\begin{equation*}\label{1-1-0}
\proj_kf(x) =\frac{k+\lambda}{\lambda}\f1{\o_d}\int_{\SS^{d}} f(y) C_k^\lambda (x\Cdot
y)\,d\sigma(y),\quad x\in\SS^{d},\   \  \l=\f{d-1}2,
\end{equation*}
where the $C_k^\lambda$ denote  the Gegenbauer polynomials as defined in \cite{Sz}.
As a result, each spherical  polynomial $f\in
\Pi_n^d$ has an integral representation,
\begin{equation*}
f(x)=\int_{\SS^{d}}G_{n}(x\Cdot y)f(y)\,d\sigma(y),
\end{equation*}
where
\begin{equation}\label{reproduce}
G_{n}(t)=\f1{\o_d}\sum_{k=0}^n \f{k+\ld}\ld C_k^{\ld}(t)=d_{n}R_n^{(\frac{d}{2},\frac{d-2}{2})}(t),
\end{equation}
$R_{n}^{(\alpha,\beta)}(t)=\frac{P_{n}^{(\alpha,\beta)}(t)}{P_{n}^{(\alpha,\beta)}(1)}$
 denotes the normalized Jacobi polynomial, and  $d_{n}:=\dim \Pi_n^d/\o_d$.

The classical \textit{Nikolskii inequality} for spherical polynomials reads as
follows (see, e.g., \cite{Kam}):
\begin{equation}\label{Nikol}
\|f\|_q \le C_{d}n^{d\,(\frac 1p-\frac 1q)}\|f\|_p\qquad \forall\,f\in\Pi_n^d,\quad
0<p<q\le \infty.
\end{equation}
 In the case when $0<p\leq 2$ and $0<p<q\leq \infty$, the constant (not the optimal one) in \eqref{Nikol} can be written explicitly  (see, for instance, \cite{BDD, ul}):
\begin{equation}\label{C-p-2}
\|f\|_q \le (d_{n})^{1/p-1/q}
\|f\|_p\qquad \forall\,f\in\Pi_n^d,
\end{equation}
where $d_n=\o_d^{-1}\text{dim}\ \Pi_n^d$.

Our main interest in this paper is the following Nikolskii constant:
\begin{equation}\label{1-5}
    C(n,d,p,q):=\sup\Bigl\{\|f\|_{L^q(\SS^{d})}
    \colon\ f\in\Pi_n^{d}\quad
\text{and}\quad \|f\|_{L^p(\SS^{d})}=1\Bigr\},\   \ 0<p<q\leq \infty.
\end{equation}
By log-convexity of the $L^p$-norm, it is easily seen   that if $0<p<q<q_1\leq \infty$,  then
$$ C(n, d, p, q) \leq C(n, d, p, q_1)^{\f { 1/p- 1/q}{1/p-1/{q_1}}}.$$
 Also, note that  according to  \eqref{C-p-2},  if $0<p\leq 2$ and $p<q$, then (see also \cite{Dei09})
\begin{equation*}\label{C1-upper-est}
C(n,d, p, q)\le d_{n}=\Bl(\frac{1}{2^{d} \Ga(d/2+1)\pi^{d/2}}\Br)^{ 1/p-1/q}\,n^{d( 1/p-1/q)}\bigl(1+O(n^{-1})\bigr).
\end{equation*}

 The asymptotic order in the Nikolskii inequality \eqref{Nikol} or \eqref{C-p-2}   is sharp  in the sense that $C(n,d,p,q)\asymp  n^{d(\f 1p-\f1q)}$ for $0<p<q\leq \infty$ as $n\to \infty$
with the constant of equivalence depending only on $d$ and $p$ when $p\to 0$.
However,
the exact value of the sharp constant $C(n,d,p,q)$ is known only   in the case when $p=2$ and $q=\infty$, where a simple  application of  the addition formula for spherical harmonics leads to
\begin{equation}\label{1-8L2}
    C(n, d, 2, \infty)=\sqrt{d_n}.
\end{equation}
For $(p, q)\neq (2, \infty)$, the constant in  \eqref{C-p-2} is not optimal.
It is a longstanding  open  problem to determine the exact value of the  constant $C(n,d,p,q)$ for $(p, q)\neq (2,\infty)$ and $0<p<q\leq \infty$.  This problem is  open even for  trigonometric polynomials on the unit circle (i.e., the case of $d=1$). We refer to \cite{AD14, Gor05} for  historical background on this problem.

Of related interest is a recent  result of Arestov and Deikalova \cite{AD14} showing that the supremum in \eqref{1-5} can be in fact  achieved by zonal  polynomials for $q=\infty$. More precisely, they proved that
\begin{equation}\label{C-P}
C(n,d, p, \infty)=\sup_{
 \deg P\leq n}\frac{P(1)}{\Bl( {\o_{d-1}}\int_{-1}^1 |P(t)|^p(1-t^2)^{(d-2)/2}\, dt\Br)^{1/p}},\   \ 0<p<\infty
\end{equation}
with the supremum being taken over all real  algebraic polynomials $P$ of degree at most $n$ on $[-1,1]$.

In this paper, we will  study the asymptotic behavior of the quantity  $\f{C(n,d,p,q)}{n^{d(1/p-1/q)}}$ as $n\to\infty$.  Our work was  motivated by  a recent   work of Levin and
Lubinsky \cite{LL1, LL2}, who proved (using the notation of the current paper)\footnote{ Trigonometric polynomials in \cite{LL1, LL2} are written in the form $P(e^{it})$ with $P$ being an algebraic polynomial of degree $n$ on $[-1,1]$. Note that  the absolute value $|P(e^{it})|$ corresponds to the absolute value of a trigonometric polynomial of degree at most $(n+1)/2$. } that for $d=1$,
\begin{equation*}\label{}
  \lim_{n\to\infty}  \f { C(n, 1, p, \infty)}{ n^{ 1/p}}=\mathcal{L}( p, \infty), \  \  0<p<\infty,
\end{equation*}
and
\begin{equation*}\label{}
  \liminf_{n\to\infty}  \f { C(n, 1, p, q)}{ n^{ 1/p-1/q}}\ge \mathcal{L}(p,q),\   \  0<p<q<\infty.
\end{equation*}
Here  the constant $\mathcal{L}(p,q)$ is defined as
$$\mathcal{L}(p,q):=\sup\f{\|f\|_{L^q(\RR)}}{\|f\|_{L^p(\RR)}},\   \   0<p<q\leq \infty$$
with the supremum being taken over all entire functions of exponential type at most $1$.
For more related results in one variable, we also  refer to  \cite{Gor05, gt}.

Our main goal in this paper is to   extend these  results of  Levin and
Lubinsky to the high dimensional cases. To be more precise,  recall that
an entire function $F$ of $d$-complex variables is  of spherical exponential type at most $\sa>0$ if for every $\va>0$ there exists a constant $A_\va>0$ such that
$|F(z)|\leq A_\va e^{(\sa+\va)|\text{Im} (z)|}$ for all $z=(z_1, \cdots, z_d)\in\CC^d$. Given $0<p\leq \infty$, we denote by  $\mathcal{E}_{p}^{d}$ the class of all entire functions
of  spherical exponential type at most  $1$ on $\CC^d$ whose restrictions to $\RR^d$  belong to  the space $L^p(\RR^d)$, (see, for instance, \cite[Ch.~3]{Nik75} and \cite{grafakos}).

According to the Palay-Wiener theorem (\cite[Subsect.~3.2.6]{Nik75}),   each  function  $f\in \mathcal{E}_{p}^{d}$ can be identified with a function  in $ L^p(\RR^d)$ whose
 distributional Fourier transform is  supported in the unit ball $\mathbb{B}^{d}=\{x\in \RR^{d}\colon
|x|\le 1\}$.  Here  we also  recall that the Fourier transform of $f\in L^1(\RR^d)$ is defined by
\[
\mathcal{F} f(\xi)\equiv \wh{f}(\xi)=\int_{\RR^d} f(x) e^{- i x\Cdot \xi}\, dx,\quad \xi\in\RR^d,
\]
while   the inverse Fourier transform is given by
\[
\mathcal{F}^{-1} f(x)=\frac 1{(2\pi)^d}\int_{\RR^d} f(\xi) e^{ix\Cdot \xi}\, d\xi,\quad f
\in L^1(\RR^d),\  \  x\in\RR^d.
\]
As is well known, if  $0<p<q\leq \infty$, then  $\mathcal{E}_p^d\subset \mathcal{E}_q^d$
 and there exists a constant $C=C_{d,p,q}$ such that $\|f\|_q \leq C \|f\|_p$ for all $f\in\mathcal{E}_p^d$.
For $0<p<q\leq \infty$,
let $\mathcal{L}(d,p,q)$  denote the sharp Nikolskii constant  defined by
\[
\mathcal{L}(d,p,q):=\sup\{\|f\|_{L^q(\RR^d)}\colon\ f\in\mathcal{E}_{p}^{d}\ \
\text{and}\ \ \|f\|_{L^p(\RR^d)}=1\}.
\]
  Recall also  that the constant $C(n, d, p, q)$ is defined in \eqref{1-5}.

In this paper, we will prove the following theorem, which extend a recent  result of  Levin and
Lubinsky \cite{LL1, LL2}:
\begin{thm}\label{thm-3-1}\begin{enumerate}[\rm (i)]
\item For $0< p<\infty$, we have
\begin{equation*}\label{}
\lim_{n\to \infty} \frac{C(n,d, p,\infty)} {n^{d/p}}=\mathcal{L}(d, p, \infty).
\end{equation*}

\item For $0< p<q\le \infty$,
\[
  \liminf_{n\to \infty} \frac{C(n,d,p,q)}{n^{d(1/p-1/q)}}\ge
  \mathcal{L}(d,p,q).
\]
\end{enumerate}
\end{thm}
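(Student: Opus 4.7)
The plan is to prove the lower bound in part (ii) (which also gives the $\ge$ direction in (i)) by transplanting a near-extremal entire function in $\mathcal{E}_p^d$ to a spherical polynomial via the reproducing kernel $G_n$, and then to prove the matching $\le$ direction in (i) by the inverse process: rescaling an extremal zonal polynomial and extracting a subsequential limit in $\mathcal{E}_p^d$.

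\textbf{Lower bound.} Given $\varepsilon>0$, pick $f\in\mathcal{E}_p^d$ with $\|f\|_p=1$ and $\|f\|_q\ge\mathcal{L}(d,p,q)-\varepsilon$; by smoothing the Fourier support strictly inside $\BB^d$ we may assume $f$ also lies in every $\mathcal{E}_r^d$ and decays rapidly. Parametrize a neighborhood of the north pole $e$ by the exponential map $u\mapsto x(u)$, $u\in\RR^d$, $|u|<\pi$, with $d\sigma(x)=(\sin|u|/|u|)^{d-1}\,du$, and transplant $f$ at scale $n$ by
$$\tilde f_n(x):=f(n\,u(x))\,\chi(|u(x)|),$$
where $\chi\in C_c^\infty([0,\pi))$ equals $1$ on $[0,\pi/4]$ and vanishes past $\pi/2$. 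Set $P_n:=G_n*_\s\tilde f_n\in\Pi_n^d$. The Mehler-Heine asymptotic applied to the Jacobi polynomial $R_n^{(d/2,(d-2)/2)}$ in \eqref{reproduce} gives, locally uniformly in $v\in\RR^d$,
$$n^{-d}\,G_n\!\left(\cos\frac{|v|}{n}\right)\;\longrightarrow\;\mathcal{K}(|v|)\quad(n\to\infty),$$
where $\mathcal{K}$ is the radial profile of the reproducing kernel of $\mathcal{E}_\infty^d$ (a suitably normalized Fourier inverse of $\chi_{\BB^d}$, hence a Bessel function). Combined with standard off-diagonal decay estimates for $G_n$ and the identity $\mathcal{K}*f=f$, the change of variables $x=x(v/n)$ (Jacobian $n^{-d}(1+o(1))$) yields
$$n^d\,\|P_n\|_{L^r(\SS^d)}^r\;\longrightarrow\;c^r\,\|f\|_{L^r(\RR^d)}^r,\quad r\in\{p,q\},$$
for a universal constant $c>0$ (absent for $r=\infty$), which cancels when taking ratios. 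Therefore
$$\liminf_{n\to\infty}\frac{C(n,d,p,q)}{n^{d(1/p-1/q)}}\;\ge\;\frac{\|f\|_q}{\|f\|_p}\;\ge\;\mathcal{L}(d,p,q)-\varepsilon,$$
and letting $\varepsilon\to0$ closes this direction.

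\textbf{Upper bound in (i).} By the Arestov-Deikalova formula \eqref{C-P}, an extremizer for $C(n,d,p,\infty)$ can be taken zonal: $P_n(x)=Q_n(x\cdot e)$, $\deg Q_n\le n$, $Q_n(1)=\|P_n\|_\infty$. Normalize $Q_n(1)=1$ and define
$$f_n(y):=Q_n(\cos(|y|/n)),\qquad y\in\RR^d.$$
Since $\cos(|y|/n)$ is entire of spherical exponential type $1/n$ and $\deg Q_n\le n$, $f_n$ is entire of spherical exponential type $\le 1$, radial, with $\|f_n\|_\infty=f_n(0)=1$. A Montel-type normal-family argument extracts a subsequence $f_{n_k}\to f$ locally uniformly, with $f$ entire of spherical exponential type $\le 1$, radial, and $f(0)=\|f\|_\infty=1$. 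For fixed $A>0$ and $n$ large enough that $A/n<\pi$, the substitution $\theta=r/n$ gives
$$\int_{|y|\le A}|f_n(y)|^p\,dy=\omega_{d-1}\,n^d\int_0^{A/n}|Q_n(\cos\theta)|^p\,\theta^{d-1}\,d\theta\;\le\; n^d\,(1+o_A(1))\,\|P_n\|_p^p,$$
using $(\theta/\sin\theta)^{d-1}=1+O(\theta^2)$ on $[0,A/n]$ together with the identity $\|P_n\|_p^p=\omega_{d-1}\int_0^\pi |Q_n(\cos\theta)|^p(\sin\theta)^{d-1}\,d\theta$. By Fatou and $A\to\infty$, $\|f\|_p\le\liminf_n n^{d/p}\|P_n\|_p$, which is finite by \eqref{Nikol}, so $f\in\mathcal{E}_p^d$. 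Hence
$$\mathcal{L}(d,p,\infty)\;\ge\;\frac{\|f\|_\infty}{\|f\|_p}\;\ge\;\limsup_{n\to\infty}\frac{C(n,d,p,\infty)}{n^{d/p}},$$
which together with the lower bound establishes (i).

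\textbf{Main obstacle.} The most delicate step is proving the convergence $n^d\|G_n*_\s\tilde f_n\|_r^r\to c^r\|f\|_r^r$ in the lower bound. It requires both the Mehler-Heine-type asymptotic of $G_n$ on the scale $1/n$ and sufficiently sharp off-diagonal decay of $G_n$ to absorb the contributions coming from outside the support of $\chi$ (and from the failure of $\chi$ to equal $1$ on its full support); for $r<1$ a quasi-triangle variant is needed in place of Minkowski's inequality. By contrast, the upper-bound half, once \eqref{C-P} is invoked, reduces to a Montel-plus-Fatou argument that is comparatively routine.
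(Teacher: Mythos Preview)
Your lower-bound argument follows the paper's transplantation strategy, but with one consequential change: you convolve with the sharp reproducing kernel $G_n$ of \eqref{reproduce} rather than with a smoothed kernel $G_{n,\eta}$ built from a $C^\infty$ cutoff $\eta$. The off-diagonal decay available for $G_n$ is only $|G_n(\cos\theta)|\le C\,n^d(1+n\theta)^{-(d+1)/2}$ (equivalently $|n^{-d}G_n(\psi(x/n)\cdot\psi(v/n))|\le C(1+|x-v|)^{-(d+1)/2}$), and this exponent is strictly below $d$ for $d\ge 2$. Concretely, on the set $\{\rho(x,e)\ge c\}$ your $P_n=G_n*_\sigma\tilde f_n$ satisfies only $|P_n(x)|\le C\,n^{-(d+1)/2}$, so that $n^d\int_{\rho\ge c}|P_n|^p\,d\sigma\le C\,n^{\,d-p(d+1)/2}$, which diverges for every $p<2d/(d+1)$. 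Thus the claimed convergence $n^d\|P_n\|_p^p\to c^p\|f\|_p^p$ cannot be obtained from ``standard off-diagonal decay of $G_n$'' in the full range $0<p<\infty$. The paper avoids exactly this obstruction by first reducing to $\operatorname{supp}\widehat f\subset B(1-\varepsilon)$ and then replacing $G_n$ by $G_{n,\eta}$ with $\eta\in C_c^\infty[0,1)$, $\eta\equiv 1$ on $[0,1-\varepsilon]$, for which $|G_{n,\eta}(u\cdot v)|\le C_\ell n^d(1+n\rho(u,v))^{-\ell}$ for every $\ell$; it also uses a maximal-function bound for $\mathcal{E}_p^d$ (Lemma~\ref{lem-2-3}) in place of your Schwartz-decay hypothesis on $f$. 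With those substitutions your outline goes through; as written, the small-$p$ range is a genuine gap.

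Your upper-bound argument, by contrast, is correct and takes a genuinely different route from the paper. The paper does \emph{not} invoke the Arestov--Deikalova zonal reduction \eqref{C-P}; instead it represents an extremal $P_n$ via a well-separated spherical $4n$-design (Bondarenko--Radchenko--Viazovska), passes to subsequential limits of the design nodes and values, and reconstructs a function in $\mathcal{E}_p^d$ as an infinite superposition of translates of $K_{\eta_1}$. Your approach---reduce to a zonal extremizer $P_n(x)=Q_n(x\cdot e)$ with $Q_n(1)=\|P_n\|_\infty$, set $f_n(y)=Q_n(\cos(|y|/n))$ (which is entire of spherical exponential type at most $1$ since $Q_n(\cos\theta)=\sum_{k=0}^n b_k\cos(k\theta)$ and each $\cos(k|y|/n)$ has type $k/n\le 1$), then use Montel plus Fatou---is considerably more elementary once \eqref{C-P} is granted, and completely bypasses the spherical-design machinery the paper describes as essential. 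One small point to tighten: first pass to a subsequence along which $C(n,d,p,\infty)/n^{d/p}$ attains its $\limsup$, and only then extract the Montel subsequence, so that the final inequality indeed controls the full $\limsup$.
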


Note that as an immediate consequence of \eqref{1-8L2} and Theorem \ref{thm-3-1}, we obtain
\[
\mathcal{L}(d,2,\infty)
=
\Bigl(\frac{2}{\o_d\Gamma(d+1)}\Bigr)^{1/2}.
\]

Compared with those  in \cite{LL1, LL2} and \cite{Gor05, gt} in one variable, the proof of Theorem~\ref{thm-3-1} in more variables  is   fairly nontrivial because :  1) functions on the sphere can not be identified as periodic functions on  Euclidean space; 2) explicit  connections between  spherical  polynomial interpolation $\SS^d$ and the Shannon sampling theorem for entire functions of exponential type are not available.
 Our proof relies on a  recent deep result of
Bondarenko, Radchenko and Viazovska \cite{BRV1, BRV2} on spherical designs.

 It is a longstanding open problem to determine  the exact value of the Nikolskii constant $\mathcal{L}(d, p, \infty)$  even for $p=1$ and $d=1$.
  In this paper, we find  the exact value of the Nikolskii constant $\mathcal{L}(d, 1, \infty)$
for nonnegative functions. Our main result in this direction  can be stated as follows:
\begin{thm}\label{thm1-2} We have
$$
\lim_{n\to \infty}  \sup_{\sub{0\leq P\in\Pi_n^d\\
\|P\|_{L^1(\SS^d)}=1}}n^{-d}\|P\|_{L^\infty(\SS^d)} =\sup_{\sub{0\leq f\in\mathcal{E}_1^d\\
\|f\|_{L^1(\RR^d)}=1}}\|f\|_{L^\infty(\RR^{d})}
  =
\frac1{(4\sqrt{\pi})^d \Ga(d/2+1)}.$$
\end{thm}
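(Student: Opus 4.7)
The plan is to identify both the continuous extremal
$\CL_+(d,1,\infty):=\sup\{\|f\|_\infty:0\le f\in\mathcal{E}_1^d,\ \|f\|_1=1\}$
and the limit of the rescaled spherical extrema with the same explicit constant
$\f{1}{(4\sqrt{\pi})^d\Gamma(d/2+1)}$, thereby proving both equalities at once.

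For $\CL_+(d,1,\infty)$ I would argue in two directions. The lower bound is obtained by taking $g:=\mathcal{F}^{-1}\chi_{\{|\xi|\le 1/2\}}\in\mathcal{E}_{1/2}^d\cap L^2(\RR^d)$, a Bessel-type entire function by Paley--Wiener, and setting $f:=|g|^2$, which is nonnegative, lies in $\mathcal{E}_1^d\cap L^1(\RR^d)$, and whose ratio Plancherel makes explicit:
\[
\f{f(0)}{\|f\|_1}=\f{|g(0)|^2}{\|g\|_2^2}=\f{1}{(2\pi)^d}\cdot\f{\pi^{d/2}}{2^d\,\Gamma(d/2+1)}=\f{1}{(4\sqrt{\pi})^d\Gamma(d/2+1)}.
\]
For the matching upper bound, Bochner's theorem identifies $\widehat f$ with a continuous positive-definite function supported in $\overline{\BB^d}$ satisfying $\widehat f(0)=\|f\|_1$; I would then invoke Tur\'an's extremal problem for positive-definite functions supported in a symmetric convex body (the ball case), $\int_{\RR^d}\widehat f\le 2^{-d}|\BB^d|\,\widehat f(0)$, and invert via $f(0)=(2\pi)^{-d}\int\widehat f$. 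Translation invariance upgrades $f(0)$ to $\|f\|_\infty$.

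On the sphere, the lower bound is witnessed by the explicit nonnegative zonal polynomial $P_n(x):=G_m(x\Cdot e)^2\in\Pi_n^d$ with $m:=\lfloor n/2\rfloor$. Applying the reproducing identity for $G_m$ to $P(y):=G_m(y\Cdot e)\in\Pi_m^d$ and evaluating at $x=e$ yields $\int_{\SS^d}G_m(y\Cdot e)^2\,d\sigma(y)=G_m(1)=d_m$, so $\|P_n\|_1=d_m$ and $\|P_n\|_\infty=d_m^2$; the ratio is therefore $d_m$. Combining \eqref{dimension} with the Legendre duplication formula and $\omega_d=2\pi^{(d+1)/2}/\Gamma((d+1)/2)$ yields $\lim_{n\to\infty}n^{-d}d_{\lfloor n/2\rfloor}=\f{1}{2^{d-1}\omega_d\Gamma(d+1)}=\f{1}{(4\sqrt{\pi})^d\Gamma(d/2+1)}$, matching the continuous value exactly.

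For the matching upper bound on the sphere, which I expect to be the main obstacle, I would argue by rescaling and compactness. Given nonnegative $P_n\in\Pi_n^d$ with $\|P_n\|_1=1$ attaining its maximum at some $e_n\in\SS^d$, I rotate so $e_n=e$ and pull $P_n$ back to $\RR^d$ via the chart $\Phi_n(x):=(x/n,\sqrt{1-|x|^2/n^2})$ for $|x|<n$, defining $f_n(x):=n^{-d}P_n(\Phi_n(x))$. Spherical Bernstein-type inequalities should make $\{f_n\}$ a normal family on $\RR^d$ whose locally uniform subsequential limits $f$ are nonnegative and extend to entire functions of spherical exponential type at most $1$, with $f(0)=\lim_n n^{-d}\|P_n\|_\infty$; the change of variables $d\sigma(y)=n^{-d}(1+o(1))\,dx$ together with Fatou gives $\|f\|_{L^1(\RR^d)}\le 1$, and the continuous upper bound from the previous paragraph closes the gap. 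The technical hurdle is twofold: proving tightness (ruling out mass of $P_n$ escaping from the vicinity of $e$) and verifying the exponential-type constraint for the rescaled limit, both relying on the same toolkit (Bernstein's spherical inequality, the Bondarenko--Radchenko--Viazovska spherical designs) already invoked in the proof of Theorem~\ref{thm-3-1}.
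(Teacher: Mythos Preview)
Your lower-bound constructions (the squared Bessel kernel on $\RR^d$ and the squared reproducing kernel $G_m^2$ on $\SS^d$) coincide with the paper's, as does the arithmetic reducing both to $\frac{1}{4^d\pi^{d/2}\Gamma(d/2+1)}$.

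The upper bounds are where you and the paper diverge. On $\RR^d$ you appeal to the solution of Tur\'an's problem for positive-definite functions supported in the unit ball; this is correct---the Tur\'an constant for the Euclidean ball is indeed $2^{-d}|\BB^d|$ (a theorem of Gorbachev)---and the reduction via Bochner is exactly right. The paper instead averages $f$ over rotations fixing the origin to get a radial function and then applies a Markov-type quadrature formula of Gorbachev--Ivanov (Lemma~\ref{lem-5-1}) whose node at $0$ carries an explicit positive weight $\rho_0$, giving $\|f\|_1\ge\omega_{d-1}\rho_0 f(0)$ directly. These two arguments are cousins: the quadrature is essentially the mechanism behind the Tur\'an solution for the ball, so you are invoking as a black box what the paper (via its cited lemma) proves from first principles.

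For the spherical upper bound the contrast is sharper. You propose a rescaling/compactness argument in the style of Section~4, feeding the limit back into the continuous inequality. This can be made to work---nonnegativity passes to the limit and Fatou controls the $L^1$ norm---but the step ``subsequential limits extend to entire functions of spherical exponential type at most $1$'' is not a consequence of Bernstein's inequality alone (that only gives type $\le C_d$) and genuinely needs the reproducing-kernel/spherical-design representation from Section~4; you correctly flag this as the hurdle. The paper sidesteps all of it: it averages $P_n$ over the stabilizer of its maximum point to reduce to an algebraic polynomial $g$ on $[-1,1]$, then applies the Jacobi--Gauss--Radau quadrature (Lemma~\ref{Jacobi-Gauss-Radau}), whose node at $t=1$ with explicit positive weight $\lambda_0$ gives $\|P_n\|_1\ge\omega_{d-1}\lambda_0\,g(1)=\omega_{d-1}\lambda_0\,\|P_n\|_\infty$. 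This yields the \emph{exact} Nikolskii constant for every $n$ (Proposition~\ref{prop-5-2}), not only the limit, and uses nothing beyond classical orthogonal-polynomial quadrature---no designs, no compactness, no Paley--Wiener. Your route recovers only the asymptotic statement and leans on considerably heavier machinery.
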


  It is worthwhile to point out that the exact Nikolskii constant for  nonnegative polynomials with $p=1$ and  $q=\infty$   has interesting applications in  metric geometry. For instance, it was used  to
obtain some  tight-bounds for spherical designs in  \cite{BRV1,Lev98}.

This paper is organized as follows. Section 2 contains several  preliminary lemmas,  which will play an important role in the  proof of Theorem \ref{thm-3-1}.
Section 3 is  devoted to the proof of the   lower estimate of  Theorem \ref{thm-3-1},
\begin{equation*}
  \liminf_{n\to \infty} \frac{C(n,d,p,q)}{n^{d(1/p-1/q)}}\ge
  \mathcal{L}(d,p,q),\   \   0< p<q\le \infty,
\end{equation*}
 whereas the corresponding  upper estimate,
\begin{equation*}
  \limsup_{n\to\infty}\frac{C(n,d, p, \infty)}{n^{d/p}}\le \mathcal{L}(d, p,\infty),\   \   \  0<
p<\infty,
\end{equation*}
is proved in Section 4.
Finally, in Section 5, we prove  Theorem \ref{thm1-2}.


Throughout the paper,  all functions  are  assumed to be real-valued and Lebesgue measurable unless otherwise stated, and  we denote by  $B(r)$ the ball in $\RR^d$ centered at origin having   radius $r>0$.

\section{Preliminary lemmas}

 In this section, we will present   a few preliminary lemmas that will be used in the proof of  Theorem \ref{thm-3-1}.

  We start with the following well-known property of the  Geigenbauer polynomials.

\begin{lem}\cite[(8.1.1), p.192]{Sz}\label{lem-2-3-0} For $z\in\mathbb{C}$ and $\mu\ge 0$,
\begin{equation*}\label{2-5}
\lim_{k\to\infty} \f{ C_k^\mu \(\cos \f
{z}k\)}{C_k^\mu(1)}=
j_{\mu-1/2} (z),\end{equation*} where $j_\al (z) =\Ga(\mu+\f12)(z/2)^{-\al} J_\al (z)$, and $J_\al$ denotes the Bessel function of the first kind.  This formula holds uniformly in
every bounded region of the complex $z$-plane.\end{lem}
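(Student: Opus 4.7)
The plan is to prove the claimed limit by explicit term-by-term comparison of power series, using the hypergeometric representation of the Gegenbauer polynomial together with the standard series for the Bessel function. Concretely, I would start from
\[
\frac{C_k^\mu(x)}{C_k^\mu(1)}={}_2F_1\Bl(-k,\,k+2\mu;\,\mu+\tfrac12;\,\tfrac{1-x}{2}\Br),
\]
substitute $x=\cos(z/k)$, and use $\tfrac{1-\cos(z/k)}{2}=\sin^2(z/(2k))$ to rewrite
\[
\frac{C_k^\mu(\cos(z/k))}{C_k^\mu(1)}=\sum_{n=0}^{k}\frac{(-k)_n(k+2\mu)_n}{(\mu+\tfrac12)_n\,n!}\,\sin^{2n}\!\Bl(\frac{z}{2k}\Br).
\]
This is the natural representation since each factor can be tracked individually as $k\to\infty$.

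Next I would carry out the pointwise limit of each term. For fixed $n$,
\[
\frac{(-k)_n(k+2\mu)_n}{k^{2n}}\longrightarrow(-1)^n,\qquad k^{2n}\sin^{2n}\!\Bl(\frac{z}{2k}\Br)\longrightarrow\Bl(\frac{z}{2}\Br)^{2n},
\]
so the $n$-th summand converges to $\dfrac{(-1)^n(z/2)^{2n}}{(\mu+\tfrac12)_n\,n!}$. Summing and using $(\mu+\tfrac12)_n=\Ga(n+\mu+\tfrac12)/\Ga(\mu+\tfrac12)$, the limiting series equals
\[
\sum_{n=0}^\infty\frac{(-1)^n\,\Ga(\mu+\tfrac12)}{\Ga(n+\mu+\tfrac12)\,n!}\Bl(\frac{z}{2}\Br)^{2n}=\Ga(\mu+\tfrac12)\,(z/2)^{-(\mu-1/2)}J_{\mu-1/2}(z)=j_{\mu-1/2}(z),
\]
by the standard power-series definition of $J_{\mu-1/2}$. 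This identifies the pointwise limit.

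To upgrade pointwise convergence to uniform convergence on a bounded region $|z|\le R$, I would produce a $k$-independent summable majorant and apply dominated convergence to the counting measure on $\NN_0$. The elementary estimates $|(-k)_n(k+2\mu)_n|\le k^n(k+2\mu+n)^n\le C_n k^{2n}$ with $C_n$ bounded uniformly in $k$ (indeed $|(-k)_n|\le(k)_n\le k^n(1+n/k)^n$ and similarly for the second factor), together with $|\sin(w)|\le |w|\,e^{|\IM w|}$ which yields $|\sin(z/(2k))|^{2n}\le (R/(2k))^{2n}e^{nR/k}$ for $|z|\le R$, give the bound
\[
\Bigl|\text{$n$-th term}\Bigr|\le \frac{(R/2)^{2n}\,e^{nR}}{(\mu+\tfrac12)_n\,n!}(1+o(1))
\]
uniformly in $k$, and the right-hand side is a convergent series in $n$. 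Dominated convergence then yields term-by-term passage to the limit uniformly for $|z|\le R$.

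The main obstacle is the uniformity claim: one has to ensure the majorant is genuinely $k$-independent for all large $k$ and valid on the full complex disk $|z|\le R$ (so $\sin$ must be estimated via $|\sin w|\le|w|\cosh(|\IM w|)$ rather than $|\sin w|\le 1$). Everything else is bookkeeping with Pochhammer symbols. Once this majorant is in place, the uniform convergence on bounded regions of $\CC$ follows immediately, completing the proof.
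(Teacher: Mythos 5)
Your argument is correct and self-contained, but note that the paper does not prove this lemma at all: it is quoted verbatim from Szeg\H{o} \cite[(8.1.1), p.~192]{Sz}, i.e.\ the classical Mehler--Heine asymptotics, so there is no internal proof to compare against. What you wrote is essentially the standard (and essentially Szeg\H{o}'s own) proof of that formula, specialized to Gegenbauer polynomials: write $C_k^\mu(\cos(z/k))/C_k^\mu(1)$ as the terminating ${}_2F_1$ series in $\sin^2(z/(2k))$, pass to the limit term by term, and justify the interchange by a Tannery/dominated-convergence argument with a $k$-independent majorant, which also yields uniformity on $|z|\le R$. Two points of bookkeeping to tighten. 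First, the Pochhammer estimate $(k+2\mu)_n\le k^n\bigl(1+(2\mu+n)/k\bigr)^n$ produces an extra factor of size roughly $e^{n^2/k+2\mu n/k}$; this is legitimate to control only because the series terminates at $n=k$, so $n\le k$ and the factor is at most $C^n$ --- you dropped it in the displayed majorant, but since it is geometric in $n$ it is crushed by $(\mu+\f12)_n\,n!$ and summability is unaffected; just carry it explicitly and state that the range $n\le k$ is what makes it uniform in $k$. Second, for $\mu=0$ the identity $C_k^\mu(x)/C_k^\mu(1)={}_2F_1(-k,k+2\mu;\mu+\f12;(1-x)/2)$ is literally $0/0$, since $C_k^0\equiv 0$ for $k\ge 1$; either interpret the ratio in the renormalized (Chebyshev) sense, in which case your computation gives $\cos z=j_{-1/2}(z)$ as required, or remark that the paper only invokes the lemma with $\mu=(d-1)/2>0$. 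With these two remarks in place your proof is complete.
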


Next, we note that  a function on the sphere $\SS^d$ in general cannot be identified   with a periodic function on $\RR^d$, which is different from the one-dimensional case.   In our next lemma,   we   connect functions on $\SS^d$ with functions on $\RR^d$ via the following  mapping $\psi : \RR^d
\to
\SS^d$:
\[
\mathbf{\psi}(x) := \left(\xi \sin |x|, \cos |x|\right)\quad
\text{for \ $x=|x|\xi\in \RR^d$ \ and \ $\xi\in \SS^{d-1}$}.
\]
It is easily seen that $\psi\colon B(\pi)\to \SS^d$ is a bijective mapping and $\rho(\psi(x),e)=|x|$ for all $x\in B(\pi)$.
Furthermore, for each $f\in L^1(\SS^{d})$,
\begin{align*}
  \int_{\SS^d} f(x)\, d\sigma(x)&=\int_0^\pi
  \Bigl[\,\int_{\SS^{d-1}} f(\xi \sin \theta, \cos \theta)\, d\sa_{d-1}(\xi)\Bigr]
  \Bigl(\frac{\sin\theta}{\theta}\Bigr)^{d-1}\theta^{d-1} \, d\theta\\
  & =\int_{B(\pi)} f(\psi(x)) \Bigl(\frac{\sin |x|}{|x|}\Bigr)^{d-1}\,
  dx,
\end{align*}
where $d\sa_{d-1}$ denotes the usual  surface Lebesgue measure on $\SS^{d-1}$.
As a result,
 we may identify each function $f$ on the ball $B(n\pi)\subset \RR^d$ with a
function $f_n$ on the sphere $\SS^d$ via dilation and the mapping $y=\psi(x/n)$
for each $x\in B(n\pi)$. Indeed, we have
\begin{lem}\label{lem-2-1}\cite{DW} For $n\in\NN$ and  $f\in L^1(\SS^d)$,
\begin{equation}\label{2-2-t}
\int_{\SS^d} f ( x)\, d\sigma(x) = \frac 1{n^d} \int_{B(n\pi)}
f(\psi(x/n)) \left( \frac{\sin (|x|/n)}{|x|/n}\right)^{d-1}\, dx.
\end{equation}
\end{lem}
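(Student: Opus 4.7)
The plan is to establish \eqref{2-2-t} in three routine steps: express the integral over $\SS^d$ in geodesic polar coordinates centered at $e$, rewrite the resulting inner integral as an integral over the Euclidean ball $B(\pi)\subset \RR^d$, and finish with the dilation $x\mapsto x/n$. The identity displayed in the text immediately above Lemma~\ref{lem-2-1} already carries out the first two steps; I will give a self-contained derivation of that identity and then apply the scaling.

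First I would parameterize $\SS^d$ by geodesic polar coordinates centered at the north pole $e=(0,\dots,0,1)$. For $\theta\in(0,\pi)$ and $\xi\in\SS^{d-1}$, set $y=\psi_0(\theta,\xi):=(\xi\sin\theta,\cos\theta)\in\SS^d$. The map $\psi_0\colon(0,\pi)\times\SS^{d-1}\to\SS^d\setminus\{\pm e\}$ is a smooth bijection, and the two omitted poles form a $\sigma$-null set, so nothing is lost in the integral. The key computation is the Jacobian identity
\[
d\sigma(y)=(\sin\theta)^{d-1}\,d\theta\, d\sigma_{d-1}(\xi),
\]
which follows from the standard fact that the induced metric on $\SS^d$ in these coordinates is $d\theta^2+(\sin\theta)^2\, g_{\SS^{d-1}}$, so the volume form is $(\sin\theta)^{d-1}$ times the product form on $(0,\pi)\times\SS^{d-1}$. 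Consequently, for $f\in L^1(\SS^d)$,
\[
\int_{\SS^d} f(y)\,d\sigma(y)=\int_0^\pi\int_{\SS^{d-1}} f(\xi\sin\theta,\cos\theta)\,d\sigma_{d-1}(\xi)\,(\sin\theta)^{d-1}\,d\theta.
\]

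Next I would convert the right-hand side to an integral on the Euclidean ball $B(\pi)\subset\RR^d$. Writing $(\sin\theta)^{d-1}=\theta^{d-1}\bigl(\tfrac{\sin\theta}{\theta}\bigr)^{d-1}$ and using the standard polar decomposition $dx=r^{d-1}\,dr\,d\sigma_{d-1}(\xi)$ on $\RR^d$ with $r=|x|$ and $\xi=x/|x|$, the previous display becomes
\[
\int_{\SS^d} f(y)\,d\sigma(y)=\int_{B(\pi)} f(\psi(x))\Bl(\f{\sin|x|}{|x|}\Br)^{d-1} dx,
\]
where $\psi(x)=(\xi\sin|x|,\cos|x|)$ exactly as in the lemma. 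Finally, applying the linear change of variables $x\mapsto x/n$ (with Jacobian $n^{-d}$, and with the ball $B(\pi)$ replaced by $B(n\pi)$) yields \eqref{2-2-t}. There is no real obstacle here; the only step requiring any care is the volume-form identity for $\SS^d$ in geodesic polar coordinates, which is classical and may be cited, for instance, from the warped-product structure of $\SS^d$ around a pole, or verified directly by an induction on $d$ using the decomposition $y=(\xi\sin\theta,\cos\theta)$.
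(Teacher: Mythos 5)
Your proof is correct and follows essentially the same route as the paper: the paper simply records the geodesic polar-coordinate identity $\int_{\SS^d} f\,d\sigma=\int_{B(\pi)} f(\psi(x))\bigl(\tfrac{\sin|x|}{|x|}\bigr)^{d-1}dx$ just before the lemma and then obtains \eqref{2-2-t} by the dilation $x\mapsto x/n$, citing \cite{DW}. Your derivation of the spherical volume form via the warped-product metric and the subsequent Euclidean polar decomposition and scaling fills in exactly those steps correctly.
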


Note that in the case of $d=1$, \eqref{2-2-t} becomes
\begin{equation*}
\int_{\SS^1} f ( x)\, d\sigma(x) = \frac 1{n} \int_{- n\pi}^{n\pi}
f(\sin \f\t n, \cos \f \t n) d\t.
\end{equation*}

Our  last  preliminary  lemma can be stated as follows.
\begin{lem}\label{lem-3-5} Let $\eta$ be a $C^\infty$-function on $[0,\infty)$
that is supported on $[0,2]$ and is constant near $0$.
For a positive integer $n$, define
\[
 G_{n,\eta}( t ):=\f 1{\o_d}
\sum_{j=0}^{2n}\eta(n^{-1}j)\,\frac{j+\ld}{\mu}
\,C_j^{\ld}(t), \quad
t\in [-1,1],
\]
where $\ld=\frac{d-1}2$. Then for any $u, v\in\SS^d$, $n\in\NN$ and any $\ell>0$,
\begin{equation}\label{3-5-0}
 |G_{n,\eta}(u\cdot v)|\le C_{d, \eta, \ell} n^d (1+n\rho(u,v))^{-\ell}.
\end{equation}
Furthermore,
\begin{equation}\label{3-5}
  \lim_{n\to\infty} \f 1{n^d} G_{n,\eta}\Bigl( \psi\Bigl(\frac
  xn\Bigr)\Cdot \psi\Bigl(\frac yn\Bigr)\Bigr)= K_{\eta}(|x-y|)
\end{equation}
holds uniformly on every compact subset of $(x, y)\in \RR^d\times \RR^d$,
where $K_{\eta}(|\Cdot|)$
denotes the inverse Fourier transform of {the radial function}
$\eta(|\Cdot|)$ on $\RR^d$.
\end{lem}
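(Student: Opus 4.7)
The two parts of the lemma require rather different tools. Part (i) is a standard localization estimate for a smoothly cut-off Christoffel--Darboux-type kernel, while part (ii) is a limit computation that, given (i) for eventual dominated control, reduces to recognising the appropriately normalised sum as a Riemann sum that converges to a radial Bessel integral, which can then be identified with $K_\eta(|x-y|)$.

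For part (i), the plan is iterated Abel summation in the index $j$. Recall the differentiation rule $\tfrac{d}{dt}C_j^\lambda(t)=2\lambda\,C_{j-1}^{\lambda+1}(t)$ together with the contiguous/telescoping relations for Gegenbauer polynomials (which let one write $(j+\lambda)C_j^\lambda(t)$ as a difference of Jacobi-type values, consistent with the identity \eqref{reproduce} for the un-cut-off partial sum). Applying summation by parts $\ell$ times on $j$, the smoothness of $\eta$ produces an $\ell$-fold difference $\Delta^\ell[\eta(\cdot/n)](j)=O(n^{-\ell})$, at the cost of introducing factors of $(1-t^2)^{-1}$ and of $j$ from the recurrence. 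Using the standard pointwise bounds $|C_j^\mu(\cos\theta)|\le C_\mu\min\{j^{2\mu-1},\,j^{\mu-1}(\sin\theta)^{-\mu-1/2}\}$ on $(0,\pi)$ and summing over $j\le 2n$, the factor $n^d$ arises from summing $j^{d-1}$ over $\sim n$ indices, while the gain is converted to the decay $(1+n\rho(u,v))^{-\ell}$ via the comparison $\rho(u,v)^2\asymp 1-u\cdot v$ valid on $\SS^d$.

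For part (ii), first one checks by Taylor expansion that on any compact $K\subset\RR^d\times\RR^d$
\[
\psi(x/n)\cdot\psi(y/n) = \cos(|x|/n)\cos(|y|/n)+\sin(|x|/n)\sin(|y|/n)\tfrac{x\cdot y}{|x||y|} = 1-\tfrac{|x-y|^2}{2n^2}+O(n^{-4}),
\]
so that $\theta_n:=\arccos(\psi(x/n)\cdot\psi(y/n))$ satisfies $n\theta_n = |x-y|+O(n^{-2})$, uniformly on $K$. For each $j$ with $\eta(j/n)\ne 0$ we have $j=sn$ with $s\in[0,2]$; applying Lemma~\ref{lem-2-3-0} (in its uniform form on the compact $z$-region $\{s|x-y|:s\in[0,2],\,(x,y)\in K\}$) gives
\[
C_j^\lambda(\cos\theta_n) = C_j^\lambda(1)\bigl[j_{\lambda-1/2}(s|x-y|)+o(1)\bigr]
\]
uniformly in $j$ and $(x,y)\in K$, while $C_j^\lambda(1)=\Gamma(j+2\lambda)/[\Gamma(2\lambda)\Gamma(j+1)]\sim j^{2\lambda-1}/\Gamma(2\lambda)$ and $j+\lambda\sim j$. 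Substituting these into the definition of $G_{n,\eta}$, using $2\lambda=d-1$, and recognising the sum in the variable $s=j/n$ (with step $1/n$) as a Riemann sum yields
\[
\lim_{n\to\infty}\frac{1}{n^d}G_{n,\eta}(\psi(x/n)\cdot\psi(y/n))=\frac{1}{\omega_d\lambda\,\Gamma(2\lambda)}\int_0^\infty\eta(s)\,s^{d-1}j_{\lambda-1/2}(s|x-y|)\,ds
\]
uniformly on $K$. It remains to match this with the explicit radial inverse Fourier transform formula on $\RR^d$, obtained by writing $K_\eta(|x-y|)=(2\pi)^{-d}\int_{\RR^d}\eta(|\xi|)e^{i(x-y)\cdot\xi}\,d\xi$ in polar coordinates and evaluating the spherical integral of the plane wave via the classical Bessel identity. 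Plugging in $\omega_d=2\pi^{(d+1)/2}/\Gamma((d+1)/2)$ and $\lambda\Gamma(2\lambda)=\tfrac12\Gamma(d)$, the constants line up and the limit equals $K_\eta(|x-y|)$.

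The main obstacle is the uniformity in $j$ of the Gegenbauer asymptotic in Lemma~\ref{lem-2-3-0} across the whole range $j\in[0,2n]$ entering the sum. The genuinely small-$j$ block (say $j=O(1)$) contributes $O(1)$ to the unnormalised sum and is killed by the division by $n^d$, so the real task is to have a uniform Hilb-type asymptotic for $C_j^\lambda(\cos\theta_n)/C_j^\lambda(1)$ valid simultaneously for all $j$ with $j/n$ bounded below away from $0$ and $(x,y)$ in a compact set. The remaining steps -- the Taylor expansion, the Riemann-sum convergence, and the Fourier identification -- are then routine.
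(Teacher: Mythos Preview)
Your proposal is correct and follows essentially the same route as the paper for both parts. For \eqref{3-5-0} the paper simply cites \cite{BD}, whose proof is exactly the iterated summation-by-parts argument you sketch. For \eqref{3-5} the paper also computes the Taylor expansion of $\psi(x/n)\cdot\psi(y/n)$, invokes Lemma~\ref{lem-2-3-0}, and arrives at the same Bessel integral with the same constant $\tfrac{2}{\omega_d\Gamma(d)}$ (your $1/(\omega_d\lambda\Gamma(2\lambda))$), which is then identified with $K_\eta$ via \eqref{K-eta}.

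The one noteworthy difference is how the uniformity-in-$j$ issue you flag as the ``main obstacle'' is handled. You propose splitting off the small-$j$ block and then appealing to a Hilb-type asymptotic that is uniform for all $j$ with $j/n$ bounded below; this works but requires a stronger statement than Lemma~\ref{lem-2-3-0} as written. The paper instead rewrites the sum as an integral $\int_0^2 b_n(\rho,x,y)\rho^{d-1}\,d\rho$ of a step function $b_n$ and applies dominated convergence in $\rho$: for each \emph{fixed} $\rho>0$, the relevant index $j\sim n\rho\to\infty$ as $n\to\infty$, so Lemma~\ref{lem-2-3-0} applies pointwise in $\rho$, and a simple uniform bound $|b_n|\le c_d$ (from $|C_j^\lambda|\le C_j^\lambda(1)\sim j^{d-2}$) supplies the dominating function. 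This trick converts the uniform-in-$j$ problem into a pointwise-in-$\rho$ problem and is slightly cleaner; your direct Riemann-sum argument is equally valid once the uniform Hilb asymptotic is in hand.
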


Using the formula for the Fourier transforms of radial functions, we have
\begin{equation}\label{K-eta}
  K_{\eta}(|x|)=\frac{\omega_{d-1}}{(2\pi)^{d}} \int_0^2 \eta(\rho)
  j_{d/2-1}\bigl(\rho |x|\bigr)\rho^{d-1}\, d\rho,\quad x\in\RR^d.
\end{equation}

\begin{proof}
\eqref{3-5-0} is known (see \cite{BD}). We only need to prove
\eqref{3-5}. The proof  is very close to that in \cite{DW}. But for completeness, we include a detailed proof here.
Write
\begin{equation}\label{3-6-0}
n^{-d} G_{n,\eta}\Bigl( \psi\Bigl(\frac xn\Bigr)\Cdot\psi\Bigl(\frac
yn\Bigr)\Bigr)=\int_0^2 b_n (\rho, x, y)\rho^{d-1}\, d\rho,
\end{equation}
where
\[
b_n (\rho, x, y) =n^{-d}\f1{\o_d}\sum_{j=0}^{2n-1}
\eta(n^{-1}j)\,\frac{j+\ld}{\ld}\,C_j^{\ld}\left(\psi\Bigl(\frac xn\Bigr)\Cdot
\psi\Bigl(\frac yn\Bigr)\right) \biggl( \int_{ \frac jn}^{\frac{j+1}n}
t^{d-1}\, dt \biggr)^{-1}\chi_{[\frac jn, \frac{j+1}n)}(\rho),
\]
where $\chi_{I}$ is the characteristic function of the set $I$.
 We first claim that
\begin{equation}\label{3-6-1}
\sup_{x,y\in\RR^d} |b_n (\rho, x, y)| \le c_d\qquad
\forall\,\rho\in [0,2],\quad n=1,2,\dots.
\end{equation}

Indeed, if $0\le \rho< n^{-1}$, then \eqref{3-6-1} holds trivially. Now assume
that $0<\rho\le 2$ and $n>\rho^{-1}$. Let $1\le j\le 2n-1$ be an integer such
that $ \frac jn\le \rho < \frac{j+1}n$. Then
\[
\int_{\frac jn } ^{\frac{j+1} n} t^{d-1}\, dt \ge c n^{-1} \rho ^{d-1},
\]
and hence
\begin{align*}
|b_n (\rho, x, y)|&\leq c n^{-d}\,\frac{j+\ld}{\ld}\Bigl| C_j^\ld \left(
\psi\Bigl(\frac xn\Bigr)\Cdot \psi\Bigl(\frac yn\Bigr)\right)\Bigr|
\biggl( \int_{\frac jn}^{\frac{j+1}n} t^{d-1}\, dt \biggr)^{-1}\\
&\le c n^{-(d-1)} \rho^{-(d-1)} j^{d-1} \le c (n\rho)^{-(d-1)} j^{d-1}\le c.
\end{align*}
This shows the claim \eqref{3-6-1}.

Next, we show that, for any $\rho\in (0,2]$ and any $M>1$,
\begin{equation}\label{2-8}
  \lim_{n\to\infty} \sup_{|x|,|y|\le M}\,\Bigl|b_n (\rho, x,
y)-\frac{2}{\o_d\Gamma(d)}\,\eta(\rho) j_{d/2-1}\bigl(\rho
|x-y|\bigr)\Bigr|=0.
\end{equation}
Combining \eqref{2-8} with  \eqref{3-6-1}, \eqref{3-6-0} and \eqref{K-eta}, and observing that
\begin{equation*}\label{omega-d-d-1}
\o_d\o_{d-1} =\f {2 (2\pi)^d}{\Ga(d)},
\end{equation*}
 we will deduce  the desired equation \eqref{3-5} by dominated convergence theorem.

To show \eqref{2-8}, we assume that $|x|, |y|\le M$. All the constants in the proof below are
independent of $x, y$, but may depend on $M$. Let $n>\rho^{-1}$ and assume that
$\frac jn< \rho \le \frac{j+1} n $ with $1\le j\le 2n-1$. A straightforward
calculation then shows that
\[
\biggl( \int_{\frac jn}^{\frac{j+1}n} t^{d-1} \, dt \biggr)^{-1} = \frac n
{\rho^{d-1} } \left( 1+ O\bigl((n\rho)^{-1}\bigl)\right)\quad \text{as
$n\to\infty$}.
\]
This implies that for $ \frac jn\le \rho \le \frac{j+1} n $ with $1\le j\le
2n-1$,
\begin{align*}
b_n (\rho, x, y)
&= \frac{j+\ld}{(n\rho)^{d-1}\ld\o_d}\,\eta(\rho) C_j^{\frac{d-1}2}
\left( \psi \Bigl(\frac xn\Bigr)\Cdot \psi \Bigl(\frac yn\Bigr)\right) (
1+O((n\rho)^{-1}))\\
&= \frac 2{\o_d\Gamma(d)}\,\eta(\rho)\,\frac{C_j^{\frac{d-1}2} (\cos\theta_n(x,y)
)}{C_j^{\frac{d-1}2}(1)}+O(1)j^{-1},
\end{align*}
where we used the formula $C_j^\ld(1)=\f {\Ga(j+2\ld)}{\Ga(j+1)\Ga(2\ld)}$ in the last step, and $\theta_n (x, y)\in [0,\pi]$ satisfies
\[
\cos\theta_n (x, y)=\psi \Bigl(\frac xn\Bigr)\Cdot \psi \Bigl(\frac
yn\Bigr)=\frac{x\Cdot y} {|x|\,|y|} \sin \frac{|x|}n \sin \frac{|y|}n+\cos
\frac{|x|}n\cos \frac{|y|}n.
\]
It is easily seen that
\[
\cos\theta_n ( x, y) =1-\frac{|x-y|^2}{2n^2} +O(n^{-4}).
\]
Hence,
\begin{align*}
\theta_n(x,y)
&=\frac 1n \sqrt{|x-y|^2+O(n^{-2})}
+O (n^{-2})=\frac{|x-y|}n+O(n^{-2})\\
&=\frac{\rho |x-y|+O(j^{-1})}j.
\end{align*}

Recalling that $j\asymp n\rho \to \infty$ as $n\to\infty$, using Lemma \ref{lem-2-3-0},
we obtain that
\[
\lim_{n\to\infty} b_n(\rho, x, y) = \frac 2{\o_d\Gamma(d)}\lim_{n\to \infty}\eta(\rho)
\frac{C_j^{\frac{d-1}2} \bigl(\cos \frac{\rho |x-y|+O(j^{-1})}j
\bigr)}{C_j^{\frac{d-1}2}(1)}=\frac{2}{\o_d\Gamma(d)}\,\eta(\rho) j_{d/2-1} \left(\rho
|x-y|\right),
\]
which shows \eqref{2-8}.
\end{proof}

\section{Proof of Theorem \ref{thm-3-1}: Lower estimate}

This section is  devoted to the proof of the following  lower estimate:
\begin{equation}\label{lower-estimate}
  \liminf_{n\to \infty} \frac{C(n,d,p,q)}{n^{d(1/p-1/q)}}\ge
  \mathcal{L}(d,p,q),\   \   0< p<q\le \infty.
\end{equation}
  The proof requires the use of certain ``maximal functions'' for entire functions of exponential type given in the following   lemma:

\begin{lem}\label{lem-2-3}\cite{HW}
If $f\in\mathcal{E}_p^d$,  $0<p<\infty$ and  $\ell>d/p$, then
$$\|f_\ell^\ast \|_p\leq C_{p} \|f\|_p,$$
where $f_\ell^\ast(x):=\sup_{y\in\RR^d} \f{|f(y)|}{(1+|x-y|)^{\ell}} $ for $x\in\RR^d$.
\end{lem}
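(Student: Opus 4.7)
The plan is to dominate the Peetre-type maximal function $f_\ell^\ast$ pointwise by a power of the Hardy--Littlewood maximal function $M$ applied to $|f|^r$, for a carefully chosen $r \in (0, p)$, and then invoke the $L^{p/r}$-boundedness of $M$. The freedom to pick $r$ strictly less than $p$ is exactly what lets us bypass the failure of $M$ to be bounded on $L^1$.

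The crucial preliminary is the Plancherel--Polya pointwise estimate: for every $r > 0$ there exists a constant $C_r > 0$ such that
\[
|f(y)|^r \leq C_r \int_{B(y,1)} |f(z)|^r \, dz, \qquad y \in \RR^d,
\]
whenever $f$ is entire of spherical exponential type at most $1$ (only local integrability on the right is needed, so no global $L^r$ hypothesis is used). Given this, the inclusion $B(y,1) \subset B(x, 1+|x-y|)$ together with the definition of the Hardy--Littlewood operator yields, for every $x \in \RR^d$,
\[
\int_{B(y,1)} |f(z)|^r \, dz \leq C_d (1+|x-y|)^d \, M(|f|^r)(x),
\]
and combining the two inequalities gives the weighted pointwise bound
\[
\frac{|f(y)|}{(1+|x-y|)^\ell} \leq C_r (1+|x-y|)^{d/r - \ell} \bigl(M(|f|^r)(x)\bigr)^{1/r}.
\]
Since $\ell > d/p$, one may fix $r$ with $d/\ell < r < p$; then $d/r - \ell < 0$, the factor $(1+|x-y|)^{d/r - \ell}$ is at most $1$, and taking the supremum over $y \in \RR^d$ produces
\[
f_\ell^\ast(x) \leq C_r \bigl(M(|f|^r)(x)\bigr)^{1/r}, \qquad x \in \RR^d.
\]

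Because $p/r > 1$, $M$ is bounded on $L^{p/r}(\RR^d)$ by the classical Hardy--Littlewood maximal theorem, so
\[
\|f_\ell^\ast\|_p^p \leq C_r^{\,p} \int_{\RR^d} \bigl(M(|f|^r)(x)\bigr)^{p/r} dx \leq C_p \bigl\||f|^r\bigr\|_{L^{p/r}}^{p/r} = C_p \|f\|_p^p,
\]
which is the desired inequality. The main technical obstacle, very likely deferred to references, is the Plancherel--Polya estimate for exponents $r < 1$: the function $|f|^r$ then fails to satisfy the sub-mean-value property that makes the argument easy for $r \geq 1$ (Bernstein's inequality applied to $|f|^r$, which is then essentially subharmonic up to a bounded multiplicative factor), and one must instead exploit the compact spectral support of $f$ via the Paley--Wiener theorem, writing $f = f \ast \phi$ for a Schwartz function $\phi$ with $\widehat{\phi} \equiv 1$ on $B(1)$ and deriving the required pointwise bound from rapid decay of $\phi$.
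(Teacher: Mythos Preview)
Your argument is correct and is precisely the standard Peetre-type proof: the pointwise domination $f_\ell^\ast(x)\le C\bigl(M(|f|^r)(x)\bigr)^{1/r}$ for $d/\ell<r<p$, followed by the $L^{p/r}$-boundedness of the Hardy--Littlewood maximal operator. The paper itself does not supply a proof of this lemma at all; it simply cites Lemma~3.5 and Corollary~3.9 of Hern\'andez--Weiss (for $d=1$) and remarks that the same argument extends to $d\ge 2$. The proof in that reference is exactly the one you have written, so there is nothing to compare: you have filled in what the paper left to citation, and done so correctly. Your closing remark about the Plancherel--Polya estimate for $r<1$ is also accurate---that is indeed the one nontrivial ingredient, and the convolution representation $f=f\ast\phi$ with $\phi$ Schwartz and $\widehat\phi\equiv 1$ on $B(1)$ is the right way to obtain it.
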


Lemma \ref{lem-2-3} for $d=1$ is a direct consequence of Lemma 3.5 and  Corollary 3.9 of \cite[p. 269-271]{HW}, where  the proof   with slight modifications works equally well for the case $d\ge 2$.

Now we turn to the proof of \eqref{lower-estimate}.  Setting
\[
L_{pq}:=\liminf_{n\to\infty}\frac{C(n,d,p,q)}
{n^{d(1/p-1/q)}},
\]
we reduce to showing that
\begin{align}\label{3-6}
\|f\|_q\le L_{pq}\|f\|_p, \qquad \forall\,f\in\mathcal{E}_{p}^{d}.
\end{align}

We first assert that it is enough to prove \eqref{3-6} under  the  additional assumption  that $\supp
\wh{f}\subset B(1-\varepsilon)$ for some $\varepsilon\in (0,1)$.
Indeed, if  $f\in\mathcal{E}_{p}^{d}$, then for  any $\va>0$,
\[
\supp \Bigl(\wh{f}\,\Bigl(\frac{\Cdot}{1-\varepsilon}\Bigr)\Bigr)=\supp
\wh{f_\varepsilon}\subset B(1-\varepsilon),
\]
where $f_\varepsilon(x)=(1-\varepsilon)^d f((1-\varepsilon) x)$. Thus,
applying \eqref{3-6} to $f_\va$  instead of $f$ yields
\[
(1-\varepsilon)^{d/{q'}} \|f\|_q=\|f_\varepsilon\|_q\le
L_{pq}\|f_\varepsilon\|_p=L_{pq}(1-\varepsilon)^{d/{p'}}\|f\|_p,
\]
where $\frac 1q+\frac 1{q'}=1$. \eqref{3-6} for general $f\in\mathcal{E}_p^d$ then follows by
 letting $\varepsilon\to 0$. This proves the assertion.

 For the rest of the proof, we assume that $f\in\mathcal{E}_p^d$ and satisfies   $\supp
\wh{f}\subset B(1-\varepsilon)$ for some $\varepsilon\in (0,1)$.
We will  prove \eqref{3-6} under this extra condition.
  Let $\eta\in
C^\infty[0,\infty)$ be such that $\eta(t)=1$ for $t\in [0, 1-\varepsilon]$ and
$\eta(t)=0$ for $t\ge 1$. As in Lemma \ref{lem-3-5}, we  denote by
$K_\eta(|\Cdot|)$  the inverse Fourier transform of the function
$\eta(|\Cdot|)$.
 Then $K_\eta(|\Cdot|)$ is a Schwartz function on $\RR^d$, and  since
$\wh{f}(\xi)=\wh{f}(\xi)\eta(|\xi|)$, we have
\begin{equation}\label{3-7}
  f(x) =f* K_\eta(x)=\int_{\RR^d} f(y) K_\eta(|x-y|)\, dy,\quad x\in\RR^d.
\end{equation}

Let $m>1$ be a temporarily fixed parameter. For $n>m$, define   $f_{n,m}$ to be a   function on $\SS^{d}$ supported  on the spherical cap $\{ x\in \SS^d\colon
\rho(x, e)\le \frac{m}{n} \}$ and  such that
\[
f_{n,m}\bigl(\psi (x/n)\bigr) = f(x)\chi_{B(m)}(x)\quad \forall\,x\in B(n\pi). \]
Consider the  spherical polynomial $P_{n,m}\in \Pi_n^d$ given by
\begin{equation}\label{3-9-}
  P_{n,m}(x):=\int_{\SS^d} f_{n,m} (y) G_{n,\eta}(x\Cdot y)\, d\sigma(y),\quad
  x\in \SS^d
\end{equation}
with
\[
G_{n,\eta} (t)=\f 1{\o_d}\sum_{j=0}^n \eta\Bigl(\frac jn\Bigr) \frac{j+(d-1)/2}{(d-1)/2}\,
C_j^{\frac{d-1}2}(t).
\]
 By Nikolskii's  inequality \eqref{1-5},
\begin{equation}\label{3-9}
  \|P_{n,m}\|_{L^q (\SS^d)}\le C (n,d,p,q) \|P_{n,m}\|_{L^p (\SS^d)} .
\end{equation}
Moreover, using \eqref{2-2-t}, we have that for any $x\in B(n\pi)$
\begin{align}
  P_{n,m}(\psi(x/n))&=\int_{\SS^d} f_{n,m} (u) G_{n,\eta}(\psi(x/n)\Cdot u)\,
  d\sigma(u)\notag\\
  &=\frac 1{n^{d}}\int_{B(m)} f(y) G_{n,\eta}\Bigl(\psi(x/n)\Cdot
  \psi(y/n)\Bigr)\Bigl(\frac{\sin (|y|/n)}{ |y|/n}\Bigr)^{d-1}\,
  dy.\label{3-16-0}
\end{align}

 We now break the proof of \eqref{3-6} into several parts:

\smallbreak
\underline{Step 1.}
 We show that for any $m\in\NN$,
\begin{equation}\label{3-10}
  \lim_{n\to\infty} \sup_{x\in B(2m)}\Bigl|P_{n,m} \Bigl(\psi\Bigl(\frac
  xn\Bigr)\Bigr)-\int_{B(m)} f(y) K_\eta(|x-y|)\, dy \Bigr|=0.
\end{equation}
This combined  with \eqref{3-7}, in particular, implies that
\begin{equation}\label{3-11}
\lim_{m\to\infty} \limsup_{n\to \infty} |P_{n,m} (\psi(x/n))-f(x)|=0\quad
\forall\,x\in\RR^d.
\end{equation}

To show \eqref{3-10}, we use \eqref{3-16-0} to obtain
\begin{align*}
P_{n,m} (\psi(x/n))
&=\int_{B(m )}
K_\eta(|x-y|)
f(y) \, dy+R_{n,1}(x)+R_{n,2}(x),
\end{align*}
where
\begin{align*}
|R_{n,1}(x)|&\le \int_{B(m)} |f(y)| \Bigl|\f 1{n^d }G_{n,\eta}(\psi\Bigl(\frac
xn\Bigr)\Cdot \psi\Bigl(\frac yn\Bigr))-K_\eta(|x-y|)\Bigr|\,dy,\\
|R_{n,2}(x)|&\le C\int_{B(m)}|f(y)||K_\eta(|x-y|)|\Bigl[1-\left( \frac{\sin
(|y|/n)}{|y|/n}\right)^{d-1}\Bigr]\,dy.
\end{align*}
By either Nikolskii's inequality, $\|f\|_1\leq C_p\|f\|_p$ for  $0<p<1$, or H\"older's inequality if $p\ge 1$,
\[
|R_{n,2}(x)|\le C_m \|f\|_p\sup_{|y|\le m}\Bigl[1-\left( \frac{\sin
(|y|/n)}{|y|/n}\right)^{d-1}\Bigr],
\]
which goes to zero uniformly as $n\to\infty$. On the other hand, it follows by
Lemma \ref{lem-3-5} and the dominated convergence theorem that
\begin{align*}
&\limsup_{n\to\infty}\sup_{x\in B(2m)} |R_{n,1}(x)|\\
&\qquad \le \Bl(\int_{B(m)} |f(y)|\,dy\Br) \lim_{n\to\infty} \Bigl(\,\sup_{u, v\in
B(2m)}\Bigl|\frac 1{n^{d}}\,G_{n,\eta} \Bigl(\psi\Bigl(\frac
un\Bigr)\Cdot \psi\Bigl(\frac v n\Bigr)\Bigr)-K_{\eta}(|u-v|)\Bigl|\Bigr) =0.
\end{align*}
This proves \eqref{3-10}.

\smallbreak
\underline{Step 2.} Prove that for any $\ell>1$,
\begin{equation}\label{step2}
  n^d\int_{\rho(x, e) \ge \frac{2m}n}|P_{n,m}(x)|^p\,
  d\sigma(x)\le C m^{-\ell p}\|f\|_p^p.
\end{equation}

For $x\in\SS^d$ such that $\rho(x, e)\ge
\frac{2m}n$,  write  $x=\psi(u/n)$ with $2m\leq |u|\leq n\pi$. Since $f_{n,m}$ is supported in the spherical cap $\{ y\in \SS^d\colon
\rho(y, e)\le \frac{m}{n} \}$, using \eqref{3-9-} and
Lemma~\ref{lem-2-1} with $\ell >d(1+1/p)$, we obtain that
\begin{align*}
  |P_{n,m}(x)|&\le  \int_{\rho(y, e)\le \frac mn}
  |f_{n,m}(y)| |G_{n,\eta}(x\Cdot y)|\, d\sigma(y)\\
  &\le C n^{d} (1+n\rho(x, e))^{-2\ell-d-1} \int_{\SS^d}
  |f_{n,m}(y)|\, d\sigma(y)\\
  &\leq C m^{-\ell} \int_{|v|\leq m} |f(v)| (1+|u-v|)^{-\ell-d-1}\, dv\leq C m^{-\ell} f_\ell^\ast (u).
\end{align*}
Integrating over the domain $\{x\in\SS^d\colon \rho(x, e)\ge \frac{2m}{n}\}$
then yields
\begin{align*}
  n^d\int_{\rho(x, e)\ge \frac{2m}n}|P_{n,m}(x)|^p\, d\sigma(x)&\le
  C \int_{2m\leq |u|\leq n\pi}|P_{n,m}(\psi(u/n))|^p\, du\\
  &\leq C m^{-\ell p} \int_{\RR^d}|f_\ell^\ast (u)|^p\, du \leq C m^{-\ell p}\|f\|_p^p,
\end{align*}
where the last step uses Lemma \ref{lem-2-3}.
\smallbreak
\underline{Step 3.} Show that for each fixed $m\ge 1$ and any $\ell>1$,
\begin{align}
  &\limsup_{n\to \infty}\Bigl(n^d \int_{\rho(y, e) \le
  \frac{2m}n}|P_{n,m}(y)|^p\, d\sigma(y)\Bigr)^{p_1/p}\notag\\
  &\qquad \le (1+C m^{-\ell})\|f\|_p^{p_1}
  +C \Bigl(\int_{|x|\ge m/2} |f_\ell^\ast(x)|^p\, dx\Bigr)^{p_1/p}
  \label{3-14}
\end{align}
where $p_1=\min\{p,1\}$.

Indeed, using \eqref{2-2-t}, we have
\begin{align*}
&\Bigl(n^d\int_{\rho(y, e) \le \frac{2m}n}|P_{n,m}(y)|^p\,
d\sigma(y)\Bigr)^{p_1/p}=\Bigl(\int_{B(2m)} |P_{n,m}
(\psi(x/n))|^p\Bigl(\frac{\sin (|x|/n)}{|x|/n}\Bigr)^{d-1}\, dx\Bigr)^{p_1/p}\\
&\qquad \le \Bigl(\int_{B(2m)} \Bigl|P_{n,m} (\psi(x/n))-\int_{B(m)} f(y)
K_\eta(|x-y|)\, dy\Bigr|^p\,dx\Bigr)^{p_1/p}\\
&\qquad +\Bigl(\int_{B(2m)} \Bigl| \int_{B(m)} f(y)K_\eta(|x-y|)\,
dy\Bigr|^p\,dx\Bigr)^{p_1/p}=: I_{n,m}+J_{n,m}.
\end{align*}
For the first term $I_{n,m}$, we have
\[
I_{n,m}\le C m^{p_1d/p} \sup_{x\in B(2m)} \Bigl|P_{n,m} (\psi(x/n))-\int_{B(m)}
f(y) K_\eta(|x-y|)\, dy\Bigr|^{p_1},
\]
which, using \eqref{3-10}, goes to zero as $n\to \infty$. For the second term
$J_{n,m}$, we use  \eqref{3-7} to obtain
\begin{align*}
  J_{n,m}&=\Bigl(\int_{B(2m)} \Bigl|f(x)- \int_{|y|\ge m} f(y)K_\eta(|x-y|)\,
  dy\Bigr|^p\, dx\Bigr)^{p_1/p}\\
  &\le \|f\|_p^{p_1} +
  \Bigl(\int_{m/2\leq |x|\leq 2m}\Bigl|\int_{|y|\ge m} |f(y)| |K_\eta(|x-y|)|\, dy\Bigr|^p\,
  dx\Bigr)^{p_1/p} \\
  &+\Bigl(\int_{|x|\leq m/2}\Bigl|\int_{|y|\ge m} |f(y)| |K_\eta(|x-y|)|\, dy\Bigr|^p\,
  dx\Bigr)^{p_1/p}\\
  &=:\|f\|_p^{p_1}+J_{n,m,1}+J_{n,m,2}.
  \end{align*}
    Since $K_{\eta}(|\cdot|)$ is a Schwartz function, it is easily seen that for any $\ell>1$,
    \begin{equation*}
       J_{n,m,1}\leq C  \Bigl(\int_{|x|\ge m/2} |f_\ell^\ast(x)|^p\, dx\Bigr)^{p_1/p}.
    \end{equation*}
    and
    \begin{align*}
      J_{n,m,2}&\leq C m^{-\ell}  \Bigl|\int_{|y|\ge m} |f(y)| (1+|y|)^{-\ell-d-1}\, dy\Bigr|^{p_1}
  \leq C m^{-\ell}\|f\|_p^{p_1},
    \end{align*}
   where the last step uses  H\"older's inequality if $p\ge 1$, and  Nikolskii's inequality if $p<1$.
Putting the above together,  we obtain \eqref{3-14}.

\smallbreak
\underline{Step 4.} Conclude the proof of  \eqref{3-6}.

Setting $P_{n,m}^* (x)=P_{n,m}(\psi(x/n))\chi_{B(n\pi)}(x)$,  we have
\begin{align*}
 \|f\|_{L^q(\RR^d)}&\le \liminf_{m\to\infty}
 \liminf_{n\to\infty}\|P_{n,m}^*\|_{L^q(\RR^d)}= \liminf_{m\to\infty}
 \liminf_{n\to\infty}\,n^{d/q}\|P_{n,m}\|_{L^q(\SS^d)}\\
 &\le \liminf_{m\to\infty}\liminf_{n\to\infty}
 \,n^{d/q}C(n,d,p,q)
 \|P_{n,m}\|_{L^p(\SS^d)}\\
 &\le \Bigl[\liminf_{n\to\infty} \frac{C(n,d,p,q)}{n^{d(1/p-1/q)}}\Bigr]
 \Bigl[\liminf_{m\to\infty}\limsup_{n\to\infty}
 \,n^{d/p}
 \|P_{n,m}\|_{L^p(\SS^d)}\Bigr],
\end{align*}
where we used \eqref{3-11} and Fatou's lemma in the first step, \eqref{2-2-t} in the second step, and \eqref{3-9} in the third step.
However, combining \eqref{step2} with \eqref{3-14}, we get
\begin{align*}\limsup_{n\to\infty}\,(n^{d/p}\|P_{n,m}\|_p)^{p_1} \leq
    (1+C m^{-\ell})\|f\|_p^{p_1}
  +C \Bigl(\int_{|x|\ge m/2} |f_\ell^\ast(x)|^p\, dx\Bigr)^{p_1/p}
\end{align*}

which, according to Lemma \ref{lem-2-3},  goes to $\|f\|^{p_1}_p$ as $m\to\infty$. This proves \eqref{3-6}.

\section{Proof of Theorem \ref{thm-3-1}: upper estimate}

In this section,
we  will prove  that for $0<
p<\infty$,
\begin{equation}\label{4-1}
  \limsup_{n\to\infty}\frac{C(n,d, p, \infty)}{n^{d/p}}\le \mathcal{L}(d, p,\infty).
\end{equation}

Let $P_n\in\Pi_n^d$ be such that $\frac{\|P_n\|_\infty}{\|P_n\|_p}=C(n,d, p, \infty).$
Without loss of generality, we may assume that $P_n(e)=\|P_n\|_\infty=1$. For
the proof of \eqref{4-1}, it is then sufficient to prove that
\begin{equation}\label{4-2}
\liminf_{n\to\infty}\,n^{d/p}\|P_n\|_p\ge \mathcal{L}(d, p,\infty)^{-1}.
\end{equation}

The proof  \eqref{4-2} relies on  several lemmas.
The first lemma is on optimal asymptotic bounds for well-separated spherical designs, proved recently by  Bondarenko, Radchenko and
Viazovska \cite{BRV1, BRV2}.

\begin{lem}\cite{BRV1, BRV2}\label{lem-3-6}
Let $A=A_d$ be a large parameter depending only on $d$. Then for each integer
$N\ge A_d n^d$, there exists a set $\{z_{n,j}\}_{j=1}^{N}$ of $N$ points on
$\SS^d$ such that
\begin{equation}\label{2-22}
\f1{\o_d}\int_{\SS^d} P(x)\, d\sigma(x)=\frac 1{N}\sum_{j=1}^{N} P(z_{n,j}),\qquad
\forall\,P\in\Pi_{4n}^{d}
\end{equation}
and
$
\min_{1\le i\neq j\le N} \rho(z_{n,i}, z_{n,j})\ge c_d N^{-1/d}.$
\end{lem}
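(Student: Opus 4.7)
The lemma is the Bondarenko--Radchenko--Viazovska theorem on the existence of well-separated spherical designs of optimal cardinality, and the plan is to reconstruct the high-level strategy of that argument. Set $t=4n$, fix an $L^2$-orthonormal basis $\{Y_\alpha\}_{\alpha=1}^D$ of $\Pi_t^d$ modulo constants, and note $D=\dim \Pi_t^d-1=O(t^d)=O(n^d)$ by \eqref{dimension}. The design identity \eqref{2-22} is then equivalent to the single vector equation $F(Z)=0$ for the continuous map
\[
F\colon (\SS^d)^N\to \RR^D, \qquad F(z_1,\dots,z_N) =\Bigl(\frac{1}{N}\sum_{j=1}^N Y_\alpha(z_j)\Bigr)_{\alpha=1}^D.
\]

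The first preparatory step is to fix an area-regular partition $\SS^d=\bigsqcup_{j=1}^N R_j$ with $\sigma(R_j)=\omega_d/N$ and $\mathrm{diam}(R_j)\le c_d N^{-1/d}$; such partitions exist by classical constructions of Kuijlaars--Saff and Leopardi. Shrink each $R_j$ to a sub-cell $R_j'\subset R_j$ of comparable diameter so that distinct $R_i',R_j'$ are separated by a distance $\ge c_d' N^{-1/d}$, and restrict $F$ to the product $\prod_{j=1}^N R_j'$. For any such configuration the separation property $\min_{i\ne j}\rho(z_i,z_j)\ge c_d N^{-1/d}$ is automatic, so the problem reduces to producing \emph{some} zero of $F$ inside $\prod_j R_j'$. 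The partition also supplies, via the smoothness of the $Y_\alpha$ and a Marcinkiewicz--Zygmund-type estimate for $\Pi_t^d$, the a priori bound $\|F(Z)\|=O(N^{-1/d})$ for any cell-centered configuration $Z$.

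The key, and genuinely hard, step is topological. Following BRV, one introduces a continuous family of perturbed maps on $\prod_j R_j'$ interpolating between $F$ and an explicit auxiliary map of known (nonzero) Brouwer degree at the origin, exploiting the huge dimensional surplus $Nd\gg D$ that the hypothesis $N\ge A_d n^d$ provides. Homotopy invariance of degree then forces $F^{-1}(0)\cap \prod_j R_j'$ to be non-empty, yielding the desired set $\{z_{n,j}\}_{j=1}^N$ with both the quadrature identity \eqref{2-22} and the separation bound.

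The main obstacle is precisely the degree calculation: one must design the auxiliary deformation so that its boundary behavior on $\prod_j \partial R_j'$ is controlled uniformly in $n$, and then show its degree is nonzero. This rests on sharp quantitative estimates for how $F$ changes under local perturbations of the nodes within area-regular cells, and constitutes the real substance of the Bondarenko--Radchenko--Viazovska argument (and the refinement in BRV2, where the separation cushion $R_j'\subset R_j$ is built in). Once this step is granted, the uniformity of the constant $A_d$ in $n$ and the flexibility to prescribe the exact cardinality $N$ are immediate consequences of the same construction.
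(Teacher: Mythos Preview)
The paper gives no proof of this lemma at all: it is quoted verbatim as a black-box result from \cite{BRV1,BRV2}, with the citation placed directly in the lemma heading, and is then simply applied in the proof of the upper estimate. So there is nothing in the paper to compare your argument against.

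What you have written is a fair high-level outline of the Bondarenko--Radchenko--Viazovska strategy itself: reformulate the design condition as a zero of a map $F\colon(\SS^d)^N\to\RR^D$, constrain the nodes to well-separated sub-cells of an area-regular partition to get the separation for free, and then use a topological (degree/fixed-point) argument exploiting $Nd\gg D$ to force $F$ to vanish. You are also honest that the substantive part---constructing the deformation and verifying the nonzero degree with uniform control on the boundary---is only ``granted'' rather than carried out; that is indeed where essentially all the work in \cite{BRV1,BRV2} lies, so as written your text is an outline rather than a proof. For the purposes of the present paper no such reconstruction is needed: a one-line citation suffices.
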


 The second lemma  is on the distribution of nodes of spherical designs. Denote by $B(x,\t)$ the spherical cap $\{y\in\SS^d:\  \ \rho(x,y)\leq \t\}$ with center $x\in\SS^d$ and radius $\t\in (0, \pi]$.
\begin{lem}\label{lem-Yudin}\cite{Yu-95, Yu-05, FaLe}If the formula  \eqref{2-22} holds for all $P\in \Pi_n^d$, then
$$\SS^d =\bigcup_{j=1}^{N} B(z_{n,j}, \t_n),$$
where $\t_n=\arccos t_n\sim \f1n$ and $t_n$  is the largest  root of the following  algebraic polynomial on $[-1,1]$:
$$
  Q_n(t):=\begin{cases} P_k^{(\f{d-2}2, \f{d-2}2)}(t), \    \ &\text{if $n=2k-1$,}\\
     P_k^{(\f{d-2}2, \f{d}2)}(t),   \    \ &\text{if $n=2k$}.
\end{cases}
$$
\end{lem}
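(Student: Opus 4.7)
\noindent\emph{Proof plan.} The plan is to argue by contradiction: if the covering failed at some $x_0 \in \SS^d$, then a carefully chosen zonal polynomial evaluated at the design points would be non-positive while its integral against the Jacobi weight is strictly positive, violating the design identity \eqref{2-22}. The engine is Gauss-type quadrature: $t_n$ is to be recognised as the largest node of a positive-weight quadrature rule for the Jacobi weight $w(t) := (1-t^2)^{(d-2)/2}$ on $[-1,1]$ that is exact on polynomials of degree $\le n$.

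First I would suppose the covering fails and pick $x_0 \in \SS^d$ with $x_0 \cdot z_{n,j} < t_n$ for every $j$. Since the design is finite, this upgrades to $x_0 \cdot z_{n,j} \le s := t_n - \delta$ for some $\delta > 0$. Next I would identify the correct quadrature rule: ordinary Gauss--Jacobi with $k$ nodes at the roots of $P_k^{((d-2)/2,(d-2)/2)}$ in the odd case $n = 2k - 1$, and Gauss--Radau with fixed node $-1$ together with the $k$ roots of $P_k^{((d-2)/2, d/2)}$ in the even case $n = 2k$. In both cases the non-fixed nodes are the roots of $Q_n$, all quadrature weights are strictly positive, and the largest node equals $t_n$; I write $\tau_1 < \cdots < \tau_m$ for all nodes and $\mu_1,\ldots,\mu_m > 0$ for the weights.

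I would then build the test polynomial
\[
P(t) := (t - s)\,\varphi(t)\prod_{\substack{1 \le i < m \\ \tau_i \ne -1}}(t - \tau_i)^2,
\]
with $\varphi \equiv 1$ in the odd case and $\varphi(t) = t+1$ in the even case. By construction $\deg P = n$; the factor $(t-s) \le 0$ on $[-1,s]$ while the remaining factors are pointwise $\ge 0$, so $P \le 0$ on $[-1,s]$; and $P$ vanishes at every $\tau_i \ne t_n$ while $P(t_n) > 0$. The quadrature identity therefore gives $\int_{-1}^{1} P(t) w(t)\,dt = \mu_m P(t_n) > 0$. Applying \eqref{2-22} to the zonal polynomial $P^*(y) := P(x_0 \cdot y) \in \Pi_n^d$ and unfolding the spherical integral via polar coordinates with $x_0$ as pole,
\[
\frac{1}{N}\sum_{j=1}^{N} P(x_0 \cdot z_{n,j}) \;=\; \frac{\omega_{d-1}}{\omega_d}\int_{-1}^{1} P(t) w(t)\,dt \;>\; 0;
\]
but the left-hand side is $\le 0$ because every $x_0 \cdot z_{n,j}$ lies in $[-1,s]$, where $P \le 0$. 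This contradiction proves $\SS^d = \bigcup_j B(z_{n,j},\theta_n)$.

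The asymptotic $\theta_n \sim 1/n$ then follows from the classical asymptotics of the largest zero of $P_k^{(\alpha,\beta)}$ for fixed $\alpha,\beta$: the largest zero satisfies $\arccos t \asymp 1/k \asymp 1/n$. The main obstacle I foresee is the parity dichotomy: in the even case $n = 2k$ the degree budget is tight, and the Gauss--Radau rule (with its forced node at $-1$) produces both the $\beta$-shift in $Q_n$ and the additional $(t+1)$ factor in $P$ needed to keep $\deg P \le n$ while still vanishing at every non-extremal quadrature node. Once this bookkeeping is correctly assembled, the sign and positivity checks are routine.
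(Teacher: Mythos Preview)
The paper does not give its own proof of this lemma; it is quoted from the literature (Yudin, Fazekas--Levenshtein) and used as a black box. Your proposal is a correct reconstruction of the classical argument: the identification of $t_n$ as the largest node of the Gauss (odd $n$) or Gauss--Radau with fixed endpoint $-1$ (even $n$) quadrature for the weight $(1-t^2)^{(d-2)/2}$, the construction of the zonal test polynomial vanishing at the lower nodes and positive at $t_n$, and the resulting sign contradiction with the design identity are exactly the ingredients of Yudin's proof, and your parity bookkeeping (the $(t+1)$ factor and the $\beta$-shift to $d/2$) is right. There is nothing further to compare.
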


The third  lemma reveals a  connection between   positive cubature formulas  and the Marcinkiewitcz-Zygmund inequality on the sphere.

\begin{lem}\label{lem-3-7}\cite[Theorem~4.2]{Dai}
Suppose that $\Lambda$ is a finite subset of $\SS^{d}$, $\lambda_\omega\ge 0$ for all
$\omega\in\Lambda$, and the  formula,
$
\int_{\SS^d}f(x)\,d\sigma(x)=\sum_{\omega\in\Lambda} \lambda_\omega
f(\omega),
$
 holds  for all $f\in \Pi_{3n}^{d}$.
 Then for $0<p< \infty$ and all $f\in\Pi_n^d$,
\[
\|f\|_p\asymp \Bigl(\,\sum_{\omega\in \Lambda} \lambda_\omega |f(\omega)|^p
\Bigr)^{1/p}
\]
with the constant of equivalence depending only on $d$ and $p$ when $p\to 0$.
\end{lem}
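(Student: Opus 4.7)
The plan is the standard Marcinkiewicz--Zygmund route: I build a smooth polynomial reproducing kernel with fast off-diagonal decay, extract uniform cap-weight bounds from the cubature by testing it against polynomial bumps, and then couple both with a Plancherel--Polya-type maximal-function estimate. Concretely, pick $\eta\in C_c^\infty[0,\infty)$ with $\eta\equiv 1$ on $[0,1]$ and $\supp\eta\subset[0,3/2]$, and let $L_n(x,y):=G_{n,\eta}(x\cdot y)$ be the kernel of Lemma~\ref{lem-3-5}. Then $L_n(x,\cdot)\in\Pi_{\lfloor 3n/2\rfloor}^d$ reproduces $\Pi_n^d$, and Lemma~\ref{lem-3-5} gives the decay $|L_n(x,y)|\le C_\ell n^d(1+n\rho(x,y))^{-\ell}$ for every $\ell>0$.

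Next, for each $y\in\SS^d$ and each $r\ge c/n$, I construct a nonnegative polynomial bump $\phi_{y,r}\in\Pi_{3n}^d$ with $\phi_{y,r}\ge\chi_{B(y,r)}$ and $\int\phi_{y,r}\,d\sigma\le Cr^d$ (for instance, a suitably normalized square $K_m(\cdot,y)^2/K_m(y,y)^2$ with $K_m$ the reproducing kernel of $\Pi_m^d$ and $m\asymp 1/r$). Feeding $\phi_{y,r}$ into the cubature hypothesis yields the key weight bound
\[\sum_{\omega\in B(y,r)}\lambda_\omega\le Cr^d,\qquad c/n\le r\le\pi,\; y\in\SS^d,\]
so in particular $\lambda_\omega\le Cn^{-d}$ for every $\omega$. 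A dyadic decomposition of $\SS^d$ into annuli of radii $2^k/n$ around a fixed point upgrades this to $\sum_\omega\lambda_\omega(1+n\rho(x,\omega))^{-\ell}\le Cn^{-d}$ for $\ell>d$, whence $\sum_\omega\lambda_\omega|L_n(x,\omega)|\le C$ uniformly in $x$.

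For the direction $\sum_\omega\lambda_\omega|f(\omega)|^p\le C\|f\|_p^p$ I introduce the Plancherel--Polya maximal function $f^*(x):=\sup_{y\in\SS^d}|f(y)|(1+n\rho(x,y))^{-\beta}$ with $\beta>d/p$; the standard sphere Plancherel--Polya inequality gives $\|f^*\|_p\le C_{d,p}\|f\|_p$ for $f\in\Pi_n^d$ with constants bounded uniformly as $p\to 0$. Since $|f(\omega)|\le f^*(\omega)\le 2^\beta f^*(y)$ for every $y\in B(\omega,1/n)$, averaging produces $|f(\omega)|^p\le Cn^d\int_{B(\omega,1/n)}(f^*)^p\,d\sigma$; summing with the weights $\lambda_\omega$, swapping sum and integral, and controlling the multiplicity $\sum_\omega\lambda_\omega\chi_{B(\omega,1/n)}(y)\le Cn^{-d}$ by the cap-weight bound with $r=1/n$, yields $\sum_\omega\lambda_\omega|f(\omega)|^p\le C\|f^*\|_p^p\le C\|f\|_p^p$.

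For the reverse inequality I exploit exact discrete reproduction: since $f(\cdot)L_n(x,\cdot)\in\Pi_{3n}^d$ in its second argument, the cubature is exact on this product, giving $f(x)=\sum_\omega\lambda_\omega f(\omega)L_n(x,\omega)$. When $p\ge 1$, H\"older together with $\sum_\omega\lambda_\omega|L_n(x,\omega)|\le C$ bounds $|f(x)|^p$ by $C\sum_\omega\lambda_\omega|f(\omega)|^p|L_n(x,\omega)|$, and integrating in $x$ via $\int|L_n(x,\omega)|\,d\sigma(x)\le C$ closes the argument. The principal obstacle is the quasi-Banach range $0<p<1$, where H\"older fails: there one has to derive a discrete Plancherel--Polya estimate of the form
\[|f(x)|^p\le C\sum_\omega\lambda_\omega|f(\omega)|^p\,n^d(1+n\rho(x,\omega))^{-\ell}\]
(valid for $\ell$ large, depending on $p$) by applying the continuous Plancherel--Polya estimate to the polynomial $y\mapsto f(y)L_n(x,y)\in\Pi_{3n}^d$ and then discretizing via the exact cubature, and finally integrate in $x$. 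Keeping the constants bounded as $p\to 0$ is what forces the cubature to be exact on the full $\Pi_{3n}^d$ rather than a smaller space, and dictates how $\beta$ and $\ell$ must grow with $1/p$.
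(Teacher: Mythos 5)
You should first note that the paper does not prove this lemma at all: it is quoted verbatim from the cited reference (Dai, J.~Funct.\ Anal.\ 235 (2006), Theorem 4.2, where it is proved even for doubling weights). So the comparison is with that standard Marcinkiewicz--Zygmund argument, and your outline reconstructs exactly its main ingredients: the smoothed reproducing kernel $G_{n,\eta}$ with decay $n^d(1+n\rho)^{-\ell}$, the cap-weight bound $\sum_{\omega\in B(y,r)}\lambda_\omega\le Cr^d$ obtained by testing the cubature on nonnegative polynomial bumps, and the Plancherel--Polya-type maximal function $f^*$. Your treatment of the direction $\sum_\omega\lambda_\omega|f(\omega)|^p\lesssim\|f\|_p^p$ and of the reverse direction for $p\ge1$ is correct as sketched (modulo routine care: the lower bound $K_m(x,y)\ge cK_m(y,y)$ on $B(y,c'/m)$ needed for your bump uses the near-diagonal behaviour of the normalized Jacobi kernel, and your claim that the maximal inequality holds ``with constants bounded uniformly as $p\to0$'' is neither true nor needed, since the lemma allows the constants to depend on $p$).

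The genuine gap is the case $0<p<1$ of the reverse direction, which is precisely the hard case. Your one-line plan --- ``apply the continuous Plancherel--Polya estimate to $y\mapsto f(y)L_n(x,y)\in\Pi_{3n}^d$ and then discretize via the exact cubature'' --- does not work as stated: the cubature is exact only on polynomials, whereas the function you would need to discretize is $|f(\cdot)L_n(x,\cdot)|^p$, which is not a polynomial for $0<p<1$; and invoking the already-proved opposite MZ direction gives an inequality pointing the wrong way. Note also that the naive $\ell^1\hookrightarrow\ell^p$ subadditivity trick fails here, because the weights $\lambda_\omega$ may be arbitrarily small, so $\lambda_\omega^p$ cannot be compared to $n^{d(1-p)}\lambda_\omega$. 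The standard way to close this (and essentially what the cited proof does) is a self-improvement argument with the maximal function $f^*_\beta(x)=\sup_y|f(y)|(1+n\rho(x,y))^{-\beta}$: starting from $f(x)=\sum_\omega\lambda_\omega f(\omega)L_n(x,\omega)$, write $|f(\omega)|=|f(\omega)|^p|f(\omega)|^{1-p}$ and bound $|f(\omega)|^{1-p}\le\bigl(f^*_\beta(y)\bigr)^{1-p}(1+n\rho(y,\omega))^{\beta(1-p)}$; using $(1+n\rho(x,\omega))\le(1+n\rho(x,y))(1+n\rho(y,\omega))$ and choosing $\beta=\ell'/p$ with $\ell'>d$ and $\ell$ large, one gets $f^*_\beta(x)\le Cn^d\bigl(f^*_\beta(x)\bigr)^{1-p}\sum_\omega\lambda_\omega|f(\omega)|^p(1+n\rho(x,\omega))^{-\ell'}$; dividing by $\bigl(f^*_\beta(x)\bigr)^{1-p}$ (finite, since $f$ is a bounded polynomial) yields the pointwise discrete estimate $|f(x)|^p\le Cn^d\sum_\omega\lambda_\omega|f(\omega)|^p(1+n\rho(x,\omega))^{-\ell'}$, and integration in $x$ finishes the proof. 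With this replacement for your $0<p<1$ step, the argument is complete and coincides with the proof in the cited source.
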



Now we turn to the proof of \eqref{4-2}.
Let   $\varepsilon\in (0,1)$
be  an arbitrarily given positive  parameter, and let
$\eta_1=\eta_{1,\va}\in C_c^\infty[0,\infty)$ be such that $\eta_1(x)=1$ for $x\in
[0,1]$ and $\eta_1(x)=0$ for $x\ge 1+\varepsilon$. Let $G_{n, \eta_1}$  denote the polynomial on $[-1,1]$ as  defined  in Lemma~\ref{lem-3-5}.
 Invoking Lemma \ref{lem-3-6} with $N=N_n= An^d$, we have that for
$x\in \SS^d$,
\begin{equation}\label{4-3-0}
  P_{n}(x)=\int_{\SS^d} P_n(y) G_{n,\eta_1}(x\Cdot y)\, d\sigma(y)
  =\frac {\o_d}{N_n}\sum_{j=1}^{N_n}P_n(z_{n,j}) G_{n,\eta_1}(x\Cdot z_{n,j}).
\end{equation}
According to Lemma \ref{lem-Yudin} and Lemma \ref{lem-3-6}, the set of nodes
 $\{z_{n,i}\}_{i=1}^{N_n}\subset \SS^d$  satisfies
 \begin{equation}\label{2-24}
  \min_{1\le i\neq j\le N_n}\rho(z_{n,i}, z_{n,j})\ge
\frac{\delta_d} n\  \ \text{and}\   \  \max_{x\in\SS^d}\min_{1\leq j\leq N_n} \rho(x, z_{n,j})\leq \f {c_d}n.
 \end{equation}
   Without loss of generality, we may assume that
$z_{n,1}=e$. By Lemma \ref{lem-3-7},
\begin{equation}\label{4-3}
  \Bigl(\sum_{j=1}^{N_n}|P_n(z_{n,j})|^p\Bigr)^{1/p}
  \le C n^{d/p}\|P_n\|_p=\frac{C n^{d/p}}{C(n,d, p, \infty)}\le C_d<\infty.
\end{equation}
 Next, write $z_{n,j}=\psi(y_{n,j}/n)$ for
$1\le j\le N_n$ with
  $y_{n,j}\in B(n\pi)$. Since $\rho(\psi(u), \psi(v))\leq \f \pi {\sqrt{2}} |u-v|$ for any $u, v\in B(\pi)$, we obtain from \eqref{2-24} that
\begin{equation}\label{3-24}
  \min_{1\le i\neq j\le N_n}|y_{n,i}-y_{n,j}|\ge \delta_d'>0.
\end{equation}
 Rearrange the order of the codes  $z_{n,j}$ of the spherical  design  so that $0=|y_{n,1}|\leq |y_{n,2}|\leq \cdots\leq |y_{n, N_n}|$.

  Set   $\Lambda_n:=\{ y_{n,j}\colon 1\le j\le
N_n\}$. We claim that there exists a constant $\ga_d>0$ depending only on $d$ such that  for  $m=1, \dots, n$,
\begin{equation}\label{2-27}
B(\ga_d^{-1} m)\cap \Lambda_n\subset \{ y_{n,1},\dots, y_{n,m^d}\}\subset B(\ga_d m)\cap
\Lambda_n.
\end{equation}
Indeed, noticing  that    for any $0< t\leq n$,
  \begin{equation*}
    \Bl\{j:\   \ 1\leq j\leq N_n,  \   \  \rho(z_{n,j}, e) \leq \f {t\pi}n\Br\}=\Bl\{j:\   \ 1\leq j\leq N_n, \   \  |y_{n,j}| \leq t\pi\Br\},
  \end{equation*}
we deduce from \eqref{2-24}   that  for any $1\leq t\leq n\pi$,
$$ \#\Bl\{ j:\  \ 1\leq j\leq N_n,\  \  |y_{n, j}|\leq t\Br\}\asymp t^d,$$
which together with the monotonicity of $\{|y_{n,j}|\}_{j=0}^{N_n}$ implies the claim \eqref{2-27}.

Now  the rest of the proof follows along the same line as that of \cite{LL1}.
For simplicity, we set $P_n^* (x):=P_n(\psi(x/n))$ for
$x\in\RR^d$.
Let $\mathcal{A}$ be a sequence of positive integers such that
\begin{equation*}\label{4-4}
\lim_{n\to\infty,\ n\in\mathcal{A}} n^{d/p}\|P_n\|_p=\liminf_{n\to\infty}
n^{d/p}\|P_n\|_p.
\end{equation*}
By \eqref{4-3} and \eqref{2-27}, for each fixed $m\ge 1$, we may find a subsequence
$\mathcal{T}_m$ of $ \mathcal{A}$ such that
\begin{equation}\label{4-6}
  \lim_{n\to\infty,\ n\in \mathcal{T}_m} P_n(z_{n,j})=\lim_{n\to\infty,\ n\in
  \mathcal{T}_m} P_n^*(y_{n,j})=\alpha_j\in\RR,\quad j=1,\dots, m^d,
\end{equation}
and
\begin{equation*}\label{2-30}
  \lim_{n\to\infty,\ n\in \mathcal{T}_m} y_{n,j}=y_j\in B(\ga_d m),\quad j=1,\dots,
  m^d.
\end{equation*}
We may also assume that
\[
\mathcal{A}\supset \mathcal{T}_1\supset \mathcal{T}_2\supset \ldots\supset
\mathcal{T}_m\supset \mathcal{T}_{m+1}\supset\dots,
\]
so that both the sequences $\{\alpha_j\}_{j=1}^\infty$ and
$\{y_j\}_{j=1}^\infty$ are independent of $m$. Note that $\alpha_1=P_n(e)=1$ and
$y_1=0$.

By \eqref{4-3} and \eqref{4-6}, for each $m\ge 1$,
\[
\sum_{j=1}^{m^d} |\alpha_j|^p=\lim_{n\to\infty, n\in\mathcal{T}_m}\sum_{j=1}^{m^d}
|P_n(z_{n,j})|^p\le C_d^p.
\]
Hence,
\begin{equation}\label{4-7}
  \sum_{j=1}^\infty|\alpha_j|^p\le C_d^p<\infty.
\end{equation}

Now we define
\begin{equation}\label{4-8}
f(x):=\f {\o_d}A\sum_{j=1}^\infty \alpha_j K_{\eta_1}(|x-y_j|),\quad x\in\RR^d,
\end{equation}
where $K_{\eta_1}(|\Cdot|)$ is the inverse Fourier transform of $\eta_1(|\Cdot|)$ on
$\RR^d$. According to \eqref{3-24},  $\inf_{j\neq j'}|y_j-y_{j'}|\ge
\delta'>0$. Since $K_{\eta_1}$ is a Schwartz function on $\RR^d$,
$
\sup_{x\in\RR^d}\sum_{j=1}^\infty |K_{\eta_1}(|x-y_j|)|\le C_{\eta_1}<\infty.
$
By \eqref{4-7}, this  implies that the series \eqref{4-8} converges to $f$ both
uniformly on $\RR^d$ and  in the norm of $L^p(\RR^d)$. Moreover, the
 function $f$ satisfies that
$
\|f\|_p\le C A^{-1}\Bigl(\sum_{j=1}^\infty
|\alpha_j|^p\Bigr)^{1/p}<\infty,
$
and
\begin{equation*}\label{4-9}
  \wh{f}(\xi)=\f {\o_d} A\eta_1(|\xi|)\Bigl(\sum_{j=1}^\infty \alpha_j e^{- i y_j\cdot \xi}\Bigr),
\end{equation*}
where    the infinite series converges  in a distributional sense.   According to the Paley-Wiener theorem,   $f$ extends to
 an entire function on $\CC^d$  of spherical exponential type $1+\va$. In particular, this implies that \eqref{4-9} implies that
 the function
\[
f_\varepsilon(x):=(1+\varepsilon)^{-d} f\Bigl(\frac x{1+\varepsilon}\Bigr)
\]
belongs to the space
$ \mathcal{E}_{p}^{d}$.

To complete the proof of \eqref{4-2}, we
need  the following technical lemma:

\begin{lem}\label{lem-4-1} Let $\ga_d$ denote the constant in \eqref{2-27}. If
 $r\ge 1$ and  $m\ge 2\ga_d r$, then for  any $\ell\ge 1$,
\begin{equation}\label{4-11-0}
  \limsup_{n\to\infty, n\in \mathcal{T}_m} \sup_{x\in B(r)}\Bigl|P_n^* (x)-
  A^{-1}\omega_{d}\sum_{1\le j\le m^d} \alpha_j K_{\eta_1}(|x-y_j|)\Bigr|\le
  C_{\eta_1, \ell, r} m^{-\ell}.
\end{equation}
In particular, this implies that
\begin{equation}\label{4-12-0}
  f(0)=A^{-1}\omega_{d}\sum_{j=0}^\infty \alpha_j K_{\eta_1}(|y_j|)=1.
\end{equation}

\end{lem}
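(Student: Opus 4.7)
The plan is to combine the reproducing identity \eqref{4-3-0} with a head-versus-tail decomposition at $j=m^d$. Substituting $x=\psi(u/n)$ for $u\in B(r)$ and using $N_n=An^d$, \eqref{4-3-0} becomes
\begin{equation*}
P_n^*(u)=\frac{\omega_d}{A}\sum_{j=1}^{N_n}P_n(z_{n,j})\,\frac{1}{n^d}\,G_{n,\eta_1}\bigl(\psi(u/n)\Cdot\psi(y_{n,j}/n)\bigr).
\end{equation*}
The indices $1\le j\le m^d$ will form the head sum and converge to the target expression, while the indices $j>m^d$ will form the tail sum, which we will show has size $O(m^{-\ell})$ uniformly in $u\in B(r)$.

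For the head sum, \eqref{2-27} confines all $y_{n,j}$ with $j\le m^d$ to the fixed compact ball $\overline{B(\gamma_d m)}$, and along $n\to\infty$ in $\mathcal{T}_m$, \eqref{4-6} gives $P_n(z_{n,j})\to\alpha_j$ and $y_{n,j}\to y_j$. The Schwartz property of $K_{\eta_1}$ combined with the locally uniform convergence \eqref{3-5} from Lemma~\ref{lem-3-5} then yields
\begin{equation*}
\lim_{\substack{n\to\infty\\ n\in\mathcal{T}_m}}\sup_{u\in B(r)}\Bigl|\frac{\omega_d}{A}\sum_{j=1}^{m^d}P_n(z_{n,j})\,\frac{1}{n^d}\,G_{n,\eta_1}\bigl(\psi(u/n)\Cdot\psi(y_{n,j}/n)\bigr)-\frac{\omega_d}{A}\sum_{j=1}^{m^d}\alpha_j K_{\eta_1}(|u-y_j|)\Bigr|=0.
\end{equation*}

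For the tail sum, the hypothesis $m\ge 2\gamma_d r$ enters geometrically: by \eqref{2-27} we have $|y_{n,j}|\ge\gamma_d^{-1}m$ for $j>m^d$, so using $\rho(\psi(v/n),e)=|v|/n$ and the triangle inequality,
\begin{equation*}
n\rho\bigl(\psi(u/n),z_{n,j}\bigr)\ge\gamma_d^{-1}m-r\ge\frac{m}{2\gamma_d}\qquad\text{for all }u\in B(r).
\end{equation*}
Fixing a large exponent $\ell':=\ell+d+1$ and applying \eqref{3-5-0}, one partitions $\{j>m^d\}$ into dyadic annuli $A_k:=\{j:2^{k-1}m\le 2\gamma_d\,n\rho(\psi(u/n),z_{n,j})<2^km\}$ for $k\ge 1$. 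The minimum-separation bound $\rho(z_{n,i},z_{n,j})\ge\delta_d/n$ from \eqref{2-24} forces $|A_k|\le C_d(2^km)^d$. Combining the dyadic cardinality bound with the $\ell^p$-summability \eqref{4-3} of $\{P_n(z_{n,j})\}$---via H\"older's inequality when $p\ge 1$ and via $\sum a_j\le(\sum a_j^p)^{1/p}$ when $0<p\le 1$---bounds the tail by $C_{\eta_1,\ell,r,p}\,m^{-\ell}$, which establishes \eqref{4-11-0}. For \eqref{4-12-0}, set $u=0\in B(r)$ with any $r\ge 1$: since $P_n^*(0)=P_n(e)=1$, letting $n\to\infty$ in $\mathcal{T}_m$ and then $m\to\infty$ in \eqref{4-11-0} forces $f(0)=A^{-1}\omega_d\sum_{j=1}^\infty\alpha_j K_{\eta_1}(|y_j|)=1$.

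The main obstacle is the tail estimate: one must handle the regimes $p\ge 1$ and $0<p\le 1$ uniformly, and the dyadic series must be summable in $k$, which dictates that $\ell'$ exceed $\ell+d\max\{0,1-1/p\}$; the choice $\ell'=\ell+d+1$ accommodates all $p\in(0,\infty)$ simultaneously.
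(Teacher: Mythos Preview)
Your proposal is correct and follows essentially the same route as the paper: both split \eqref{4-3-0} into a head sum over $1\le j\le m^d$ and a tail over $j>m^d$, use Lemma~\ref{lem-3-5} together with the convergences $P_n(z_{n,j})\to\alpha_j$, $y_{n,j}\to y_j$ along $\mathcal{T}_m$ for the head, and bound the tail by $Cm^{-\ell}$ via the kernel decay \eqref{3-5-0}, the separation \eqref{2-24}, and the $\ell^p$ bound \eqref{4-3}. The only difference is presentational: the paper compresses the tail estimate into the single line $|J_{n,m}(x)|\le Cm^{-\ell}\bigl(\sum_j|P_n(z_{n,j})|^p\bigr)^{1/p}$, whereas you unpack this via a dyadic-annulus argument and an explicit case split between H\"older ($p\ge 1$) and subadditivity ($p\le 1$), which makes transparent why the choice $\ell'=\ell+d+1$ suffices for all $p$.
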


For the moment, we take Lemma \ref{lem-4-1} for granted and proceed with the proof of \eqref{4-2}.

Since $f_\va \in\mathcal{E}_p^d$,  we have
\[
  (1+\varepsilon)^{-d}=|f_\varepsilon (0)|\le \|f_\varepsilon\|_\infty \le
  \mathcal{L}(d, p,\infty)\|f_\varepsilon\|_p=\mathcal{L}(d, p,\infty) (1+\varepsilon)^{-d/p'}\|f\|_p.
\]
Thus,
\begin{equation}\label{2-35}
  \|f\|_p\ge \mathcal{L}(d,p,\infty)^{-1} (1+\varepsilon)^{-d/p}.
\end{equation}

 On the other hand,  setting $p_1=\min\{p,1\}$, we obtain that for $n\in\mathcal{T}_m$,
\begin{align*}
(n^{d/p}\|P_n\|_p)^{p_1}&\ge \Bigl(n^d \int_{B(e, \frac rn)} |P_n(x)|^p\,
d\sigma(x)\Bigr)^{p_1/p}=\Bigl(\int_{B(r)} |P_n^*
(x)|^p\Bigl(\frac{\sin (|x|/n)}{|x|/n}\Bigr)^{d-1}\, dx\Bigr)^{p_1/p}
\\
&\ge \Bigl(\int_{B(r)}\Bigl| \f {\o_d}A\sum_{j=1}^{m^d} \alpha_j
K_{\eta_1}(|x-y_j|)\Bigr|^p\Bigl(\frac{\sin (|x|/n)}{|x|/n}\Bigr)^{d-1}\,
dx\Bigr)^{p_1/p}
\\
&-C_r \sup_{x\in B(r)}\Bigl|P_n^* (x)- (A^{-1}\omega_{d})\sum_{j=1}^{m^d} \alpha_j
K_{\eta_1}(|x-y_j|)\Bigr|^{p_1}.
\end{align*}
 It then follows from Lemma \ref{lem-4-1}   that for any $\ell>1$,
\begin{align*}
\liminf_{n\to\infty}\,(n^{d/p}\|P_n\|_p)^{p_1}
&=\lim_{n\to\infty,\
n\in\mathcal{T}_m}\,(n^{d/p}\|P_n\|_p)^{p_1} \\
&\ge \Bigl(\int_{B(r)}\Bigl| \f{\o_d}A\sum_{j=1}^{m^d} \alpha_j
K_{\eta_1}(|x-y_j|)\Bigr|^p\, dx\Bigr)^{p_1/p} - C_r m^{-\ell}.
\end{align*}
Letting $m\to\infty$, we obtain from \eqref{4-8} and  the dominated convergence theorem that
\[
\liminf_{n\to\infty}\,n^{d/p}\|P_n\|_p\ge
\Bigl(\int_{B(r)}|f(x)|^p\, dx\Bigr)^{1/p}.
\]
Letting $r\to\infty$, and using \eqref{2-35}, we then deduce
\[
\liminf_{n\to\infty}\,n^{d/p}\|P_n\|_p\ge \|f\|_p \ge
\mathcal{L}(d, p,\infty)^{-1}(1+\varepsilon)^{-d/p}.
\]
 Now  the desired estimate \eqref{4-2} follows by letting $\varepsilon\to 0$. \hb

It remains to prove Lemma \ref{lem-4-1}.

\begin{proof}[Proof of Lemma \ref{lem-4-1}]
Note first that  by Lemma \ref{lem-3-5},
\[
\lim_{n\to\infty}\frac 1{n^{d}}\,G_{n,\eta_1} \Bigl(\psi\Bigl(\frac
x{n}\Bigr)\Cdot \psi\Bigl(\frac{y}{n}\Bigr)\Bigr)=K_{\eta_1}(|x-y|)
\]
holds uniformly for $x, y\in B(\ga_d m)$. Note also that for $1\le j\le m^d$,
\begin{align*}
&\Bigl|n^{-d} G_{n,{\eta_1}} \Bigl(\psi\Bigl(\frac x{n}\Bigr)\Cdot
\psi\Bigl(\frac{y_{n,j}}{n}\Bigr)\Bigr)- K_{\eta_1}(|x-y_j|)\Bigr|\\
&\qquad \le \sup_{z\in B(\ga_d m)}\Bigl|n^{-d} G_{n,{\eta_1}}\Bigl(\psi\Bigl(\frac x{n}\Bigr)\Cdot
\psi\Bigl(\frac{z}{n}\Bigr)\Bigr)-K_{\eta_1}(|x-z|)\Bigr|\\
&\qquad +\Bigl|K_{\eta_1}(|x -y_{n,j}|)-K_{\eta_1}(|x-y_j|)\Bigr|.
\end{align*}
Letting $n\to\infty$ and $n\in\mathcal{T}_m$, we conclude that for $1\le j\le m^d$,
\begin{align}\label{4-11}
\lim_{n\to\infty, n\in\mathcal{T}_m} \sup_{x\in B(\ga_d m)}\,\Bigl|\frac 1{n^d}\,
G_{n,{\eta_1}} \Bigl(\psi\Bigl(\frac x{n}\Bigr)\Cdot  \psi\Bigl(\frac{y_{n,j}}{n}\Bigr)\Bigr)-
K_{\eta_1}(|x-y_j|)\Bigr|=0.
\end{align}

Next, using \eqref{4-3-0}, we obtain that for $x\in B(r)$ and $m\ge 2r\ga_d$,
\begin{align*}
  P^*_{n}(x)
  &=\frac 1{A n^d}\sum_{j=1}^{A n^d}P_n\bigl(z_{n,j}\bigr)
  G_{n,{\eta_1}}\Bigl(\psi\Bigl(\frac xn\Bigr)\Cdot \psi(\frac{y_{n,j}}n)\Bigr)\\
  &=\sum_{1\le j\le m^d} + \sum_{m^d<j\le A n^d} =:I_{n,m}(x)+J_{n,m}(x).
\end{align*}

To estimate the second term $J_{n,m}(x)$, we note that
$\rho(\psi(\frac{y_{n,j}}{n}), e)\ge \frac{\ga_d^{-1} m}n\ge \frac{2r} n$ for $m^d\le
j\le A n^d$, and $\rho(\psi(\frac xn), e)\le \frac rn$ for $x\in B(r)$. Thus,
$\rho\bigl(\psi(\frac xn), \psi(\frac{y_{n,j}}n)\bigr)\ge \frac{c_d m}{n}$ for
$x\in B(r)$ and $m^d\le j\le A n^d$. It follows by \eqref{4-3} that for any $\ell\ge 1$ and
$x\in B(r)$,
\begin{equation}\label{4-12}
  |J_{n,m}(x)|\le C m^{-\ell}\Bl(\sum_{j=1}^{A n^d} |P_n(z_{n,j})|^p\Br)^{1/p}\le C
  m^{-\ell}n^{d/p}\|P_n\|_p \le C m^{-\ell}.
\end{equation}
For the term $I_{n,m}$, we use \eqref{4-11} and \eqref{4-6} to obtain
\begin{equation}\label{4-13}
  \lim_{n\to\infty,\ n\in\mathcal{T}_m} I_{n,m}(x)=A^{-1}\omega_{d}\sum_{1\le
  j\le m^d} \alpha_j K_{{\eta_1}}(|x-y_j|)\quad \text{uniformly for \ $x\in B(r)$}.
\end{equation}
Combining \eqref{4-12} with \eqref{4-13}, we conclude that
\[
  \limsup_{n\to\infty, n\in \mathcal{T}_m} \sup_{x\in B(r)}\,\Bigl|P_n^* (x)-
  A^{-1}\omega_{d}\sum_{1\le j\le m^d} \alpha_j K_{\eta_1}(|x-y_j|)\Bigr|\le C
  m^{-\ell}.
\]
This proves \eqref{4-11-0}.

Finally, invoking \eqref{4-11-0} with $x=0$, and recalling that $P_n^*
(0)=P_n(e)=1$, we obtain
\[
  \Bigl|1- A^{-1}\omega_{d}\sum_{0\le j\le m^d} \alpha_j K_{\eta_1}(|y_j|)\Bigr|\le
  C m^{-2}\quad \forall\,m\ge 1.
\]
Letting $m\to\infty$, we obtain  \eqref{4-12-0}. This completes the proof.
\end{proof}

\section{Proof of Theorem \ref{thm1-2}}

  We  break the proof of Theorem \ref{thm1-2} into two parts.  In the first part, we prove the following proposition, which gives  the exact value of the Nikolskii constant for nonnegative functions from the class $\mathcal{E}_1^d$ on $\RR^d$.

 \begin{prop}\label{prop-5-1}We have
 \begin{equation}\label{5-1}\sup_{0\leq f\in\mathcal{E}_1^d}\f{\|f\|_{L^\infty(\RR^{d})}}{\|f\|_{L^1(\RR^d)}}
  =
\frac1{4^d \pi^{d/2}\Ga(d/2+1)}.\end{equation}
 \end{prop}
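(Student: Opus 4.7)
The plan is to prove matching lower and upper bounds.

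For the lower bound, I would construct an explicit extremizer. Set
\[
g_0(x):=(2\pi)^{-d}\int_{B(1/2)}e^{ix\cdot\xi}\,d\xi,
\]
so that $\hat{g_0}=\chi_{B(1/2)}$ and $g_0\in\mathcal{E}_{1/2}^d\cap L^2(\RR^d)$ is real-valued on $\RR^d$ (as $\chi_{B(1/2)}$ is real and even). Let $f_0:=g_0^2$. Then $f_0\ge 0$, $f_0\in L^1(\RR^d)$, and by the convolution theorem $\hat{f_0}=(2\pi)^{-d}\chi_{B(1/2)}*\chi_{B(1/2)}$ is supported in $B(1)$, so $f_0\in\mathcal{E}_1^d$. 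Plancherel yields $\|f_0\|_1=\|g_0\|_2^2=(2\pi)^{-d}|B(1/2)|$, while the trivial estimate $|g_0(x)|\le g_0(0)=(2\pi)^{-d}|B(1/2)|$ gives $\|f_0\|_\infty=g_0(0)^2$. Using $|B(1/2)|=\pi^{d/2}/(2^d\Ga(d/2+1))$, one finds $\|f_0\|_\infty/\|f_0\|_1=g_0(0)=1/(4^d\pi^{d/2}\Ga(d/2+1))$, which matches the asserted supremum.

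For the upper bound I aim to show that $\|f\|_\infty\le(2\pi)^{-d}|B(1/2)|\,\|f\|_1$ for every nonnegative $f\in\mathcal{E}_1^d\cap L^1(\RR^d)$. After a translation I may assume $\|f\|_\infty=f(0)$. The cleanest route is a factorization $f=|g|^2$ with $g\in\mathcal{E}_{1/2}^d\cap L^2(\RR^d)$: Paley--Wiener then places $\hat g$ inside $B(1/2)$, and Cauchy--Schwarz applied to $g(0)=(2\pi)^{-d}\int_{B(1/2)}\hat g(\xi)\,d\xi$ together with Plancherel yields
\[
f(0)=|g(0)|^2\le (2\pi)^{-2d}|B(1/2)|\,\|\hat g\|_2^2=(2\pi)^{-d}|B(1/2)|\,\|g\|_2^2=(2\pi)^{-d}|B(1/2)|\,\|f\|_1,
\]
giving the required bound.

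The factorization step is the main obstacle, since in dimensions $d\ge 2$ the one-dimensional Krein--Akhiezer theorem does not extend verbatim and genuine counterexamples to a pointwise $f=|g|^2$ decomposition exist. I plan to circumvent this by recasting the desired inequality in its Fourier-dual form: setting $\phi:=\hat f$, which is continuous and positive-definite with $\supp\phi\subset B(1)$ and $\phi(0)=\|f\|_1$, the claim becomes the sharp Tur\'an bound
\[
\int_{B(1)}\phi(\xi)\,d\xi\le|B(1/2)|\,\phi(0)
\]
for positive-definite $\phi$ supported in the unit ball. Averaging $\phi$ over $SO(d)$ preserves positive-definiteness, support, $\phi(0)$ and $\int\phi$, so I may assume $\phi$ is radial. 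Writing $\phi(\xi)=\Phi(|\xi|)$ and using the Bochner--Bessel representation of radial positive-definite functions supported in $B(1)$, the inequality reduces to a one-dimensional Hankel-type estimate for the profile $\Phi$, to which the classical one-variable Akhiezer--Krein factorization applies and produces the sharp constant $|B(1/2)|$. Equality throughout is witnessed by the extremizer $f_0$ constructed above, completing the proof.
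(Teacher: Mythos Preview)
Your lower bound is correct and essentially identical to the paper's: the paper uses $f(x)=\bigl(j_{d/2}(|x|/2)\bigr)^2$, which is a nonzero constant multiple of your $g_0^2$, and computes the ratio in the same way via Plancherel.

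For the upper bound, your reduction is sound up to and including the radialization. Averaging $\phi=\hat f$ over $SO(d)$ is equivalent to averaging $f$ itself over $SO(d)$; the paper does the latter directly, without the Tur\'an detour. After this step, writing the radialized function as $f(x)=F(|x|)$, what remains in either formulation is the sharp weighted inequality
\[
F(0)\ \le\ C_d\int_0^\infty F(t)\,t^{d-1}\,dt
\]
for every even, nonnegative, entire $F$ of exponential type at most $1$ with finite weighted integral, where $C_d$ must come out to exactly $\bigl(4^d\pi^{d/2}\Gamma(d/2+1)\bigr)^{-1}\omega_{d-1}^{-1}$.

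Here your argument has a genuine gap. You assert that ``the classical one-variable Akhiezer--Krein factorization applies and produces the sharp constant,'' but you do not say how, and the natural attempt fails. Akhiezer--Krein yields $F=|h|^2$ with $h$ entire of exponential type $\le 1/2$ on $\CC$, but $h$ need not be even, so it does not lift to an element of $\mathcal{E}^d_{1/2}$ via $x\mapsto h(|x|)$, and on the one-dimensional side there is no Plancherel/Paley--Wiener identity for $h$ in the \emph{weighted} space $L^2\bigl((0,\infty),t^{d-1}\,dt\bigr)$ that would convert $|h(0)|^2\le C\int_0^\infty|h(t)|^2 t^{d-1}\,dt$ into a Cauchy--Schwarz estimate with the correct constant. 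The weight $t^{d-1}$ (equivalently, the Hankel rather than Fourier structure) is precisely what blocks the factorization route; you yourself flagged the $d\ge 2$ obstruction to $f=|g|^2$, and radializing does not remove it.

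The paper supplies the missing step with a different tool: a Markov-type quadrature formula of Gorbachev--Ivanov (Lemma~\ref{lem-5-1}) on $\mathcal{B}_{\alpha,\tau}$, exact for even entire functions of exponential type $\le 2\tau$ in $L^1\bigl((0,\infty),t^{2\alpha+1}\,dt\bigr)$, with nodes at $0$ and at the positive zeros of $J_{\alpha+1}$ and with all weights $\rho_k\ge 0$. Applying it with $\alpha=d/2-1$, $\tau=1/2$ to the radial profile and discarding every node except $0$ gives $\int_0^\infty F(t)t^{d-1}\,dt\ge \rho_0 F(0)$ with an explicit $\rho_0$ that yields exactly the claimed constant. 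This Bessel-node quadrature (which is also how the Tur\'an problem for the Euclidean ball is resolved in the literature) is the ingredient your sketch is missing.
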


 In the second part, we compute the exact value of the Nikolskii constant for nonnegative polynomials on $\SS^d$:
 \begin{prop}\label{prop-5-2}For $n=1,2,\cdots$,
 \begin{equation}\label{C-P-}
\sup_{0\leq P\in\Pi_n^d} \frac{\|P\|_\infty}{ \|P\|_1}=\o_d^{-1}
\begin{cases}
\f{(2k+d)(k+d-1)!}{k! d!},&n=2k,\\ 2\binom{d+k}{d},&n=2k+1.
\end{cases}
\end{equation}

 \end{prop}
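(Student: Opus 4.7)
The strategy is to reduce the extremal problem to a univariate reproducing-kernel computation by two standard devices: spherical symmetrization and the Markov--Lukács representation of nonnegative polynomials on $[-1,1]$. Given $0\le P\in\Pi_n^d$, let $P(x_0)=\|P\|_\infty$; after rotation we may assume $x_0=e$, and averaging $P$ over the stabilizer of $e$ in $O(d+1)$ produces a zonal polynomial $\tilde P(x)=Q(x\cdot e)$ satisfying $\tilde P(e)=P(e)=\|P\|_\infty$ and $\|\tilde P\|_1=\|P\|_1$. Conversely, every univariate polynomial $Q\ge 0$ with $\deg Q\le n$ gives rise to a nonnegative zonal $P(x)=Q(x\cdot e)\in\Pi_n^d$ with $P(e)=Q(1)\le\|P\|_\infty$ and $\|P\|_1=\omega_{d-1}\int_{-1}^1 Q(t)(1-t^2)^{(d-2)/2}\,dt$. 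Combining the two directions,
\[
\sup_{0\le P\in\Pi_n^d}\frac{\|P\|_\infty}{\|P\|_1}
= M_n:=\sup\Bigl\{\frac{Q(1)}{\omega_{d-1}\int_{-1}^1 Q(t)w(t)\,dt}:\ Q\ge 0,\ \deg Q\le n\Bigr\},\quad w(t):=(1-t^2)^{(d-2)/2}.
\]

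By the Markov--Lukács theorem, each admissible $Q$ decomposes as $Q=R^2+(1-t^2)S^2$ when $n=2k$ (with $\deg R\le k$, $\deg S\le k-1$) or as $Q=(1+t)R^2+(1-t)S^2$ when $n=2k+1$ (with $\deg R,\deg S\le k$). Since the second summand is pointwise nonnegative on $[-1,1]$, $\int Qw$ always dominates the contribution of the first, while $Q(1)$ equals $R(1)^2$ (resp.~$2R(1)^2$) and depends only on $R$. Hence the extremum is attained with $S\equiv 0$, and the problem reduces to maximizing $R(1)^2/(\omega_{d-1}\int_{-1}^1 R(t)^2 w_\star(t)\,dt)$ over polynomials $R$ of degree $\le k$, where $w_\star=w$ in the even case and $w_\star=w_1:=(1+t)w=(1-t)^{(d-2)/2}(1+t)^{d/2}$ in the odd case. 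By duality, this maximum equals the value of the reproducing kernel at $t=1$ for the corresponding weighted $L^2$-space of polynomials of degree $\le k$.

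For $n=2k$, the reproducing identity on $\Pi_k^d$, restricted to zonal polynomials via \eqref{reproduce}, reads $F(1)=\omega_{d-1}\int_{-1}^1 F(t)G_k(t)w(t)\,dt$ for every univariate $F$ with $\deg F\le k$; applied to $F=G_k$ this gives $\omega_{d-1}\int G_k^2 w=G_k(1)=d_k$, and Cauchy--Schwarz then yields the sharp bound $R(1)^2\le d_k\cdot \omega_{d-1}\int R^2 w$ (with equality at $R=G_k$), so $M_{2k}=d_k=\omega_d^{-1}\dim\Pi_k^d$, matching the claimed formula. For $n=2k+1$, the polynomials $P_j^{((d-2)/2,d/2)}$ are orthogonal with respect to $w_1$, hence the reproducing kernel at $1$ equals $K_k^{w_1}(1,1)=\sum_{j=0}^k(P_j^{((d-2)/2,d/2)}(1))^2/h_j^{((d-2)/2,d/2)}$. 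Inserting the classical values $P_j^{(\alpha,\beta)}(1)=\binom{j+\alpha}{j}$ and the standard formula for $h_j^{(\alpha,\beta)}$, a short simplification collapses the summand to $2(j+d-1)!/(2^d\,j!\,\Gamma(d/2)^2)$; the hockey-stick identity $\sum_{j=0}^k\binom{j+d-1}{d-1}=\binom{k+d}{d}$ then gives $K_k^{w_1}(1,1)=2(d-1)!\binom{k+d}{d}/(2^d\Gamma(d/2)^2)$. Combining with $M_{2k+1}=2K_k^{w_1}(1,1)/\omega_{d-1}$ and applying the Legendre duplication formula $\Gamma(d/2)\Gamma((d+1)/2)=2^{1-d}\sqrt{\pi}\,\Gamma(d)$ converts this to $2\binom{k+d}{d}/\omega_d$, as claimed. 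The main technical burden lies precisely in this Jacobi-polynomial computation, where the Gamma-function bookkeeping must conspire via the duplication formula to produce exactly the factor $\omega_d^{-1}$; the conceptual core of the argument is the duality between nonnegative-polynomial Nikolskii problems and reproducing kernels at an endpoint for the appropriate Jacobi weight.
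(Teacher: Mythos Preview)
Your argument is correct but follows a genuinely different route from the paper's. Both proofs begin with the same zonal reduction (averaging over the stabilizer of the maximum point), but then diverge. The paper obtains the upper bound by invoking the Jacobi--Gauss--Radau quadrature formula (its Lemma~\ref{Jacobi-Gauss-Radau}): since all the quadrature weights are nonnegative and the formula is exact for degree $2k$, one has $\int_{-1}^1 g(t)w(t)\,dt\ge \lambda_0\,g(1)$ for every nonnegative $g\in\Pi_{2k}$, with $\lambda_0$ given explicitly; the lower bound is then supplied separately by exhibiting $[R_k^{(d/2,(d-2)/2)}(t)]^2$. Your approach instead uses the Markov--Luk\'acs decomposition to strip off the nonnegative ``$S$-part'' and reduce to $\sup_{\deg R\le k} R(1)^2/\int R^2 w_\star$, which is recognized immediately as the Christoffel--Darboux kernel $K_k^{w_\star}(1,1)$ via Cauchy--Schwarz; this yields the sharp constant and the extremizer simultaneously. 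The two methods are secretly computing the same quantity, since the Gauss--Radau weight at the fixed node satisfies $\lambda_0=1/K_k(1,1)$, but your path avoids citing the quadrature lemma at the price of the Luk\'acs representation and an explicit Jacobi-norm calculation. A minor advantage of your write-up is that you carry out the odd case $n=2k+1$ in full (with the shifted weight $(1+t)w$), whereas the paper only states that it ``can be treated similarly''; a minor advantage of the paper's route is that positivity of the Radau weights makes the upper bound a one-line inequality, with no need to invoke the structure theorem for nonnegative polynomials.
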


Note that \eqref{C-P-}, in particular,  implies
 \begin{equation}\label{5-3}
\lim_{n\to\infty}\sup_{0\leq P\in\Pi_n^d} \frac{\|P\|_\infty}{n^d \|P\|_1}=\frac1{4^d \pi^{d/2}\Ga(d/2+1)}.
\end{equation}
Theorem \ref{thm1-2} is a direct consequence of  \eqref{5-3} and  \eqref{5-1}.

We point out that \eqref{C-P-} for algebraic polynomials on intervals was obtained in  \cite{Lev98}.
 Proofs of Propositions \ref{prop-5-1} and  \ref{prop-5-2} are given in the next two subsections respectively.

 \subsection{Proof of Proposition \ref{prop-5-1}}

 For simplicity, we set
$$\mathcal{L}^+:=\sup
\{\|f\|_\infty:\ \ 0\leq  f\in\mathcal{E}_1^d, \  \    \|f\|_1=1\}.$$
To show    the lower  estimate, $$\mathcal{L}^+\ge \frac1{4^d \Ga(d/2+1)\pi^{d/2}},$$
 we consider the  function
$f(x):=\bl(j_{d/2}({|x|}/2)\br)^2$.
 Note that  $G(x):=\frac{\omega_{d-1}}{d(2\pi)^{d}}\,
j_{d/2}(|x|)$  is   the inverse Fourier transform of $\chi_{\BB^d}$, where $\BB^d$ denotes the unit ball centered at the origin in $\RR^d$.
 It follows that  $0\leq f\in\mathcal{E}_1^d$. Furthermore,  by
Plancherel's theorem,
\begin{align*}
 \|f\|_1& =\Bl(\f {d (2\pi)^d}{\o_{d-1}}\Br)^2 2^d \int_{\RR^d} |G(x)|^2 \, dx=\f {d^2 2^d (2\pi)^d}{(\o_{d-1})^2}\int_{\BB^d} 1\, dx\\
  &=\f {2^d d (2\pi)^{d}}{\o_{d-1}}= 2^{d-1}\o_d d!.
\end{align*}
This yields the stated lower estimate:
 \begin{align*}
    \mathcal{L}^{+} \ge \f {f(0)}{\|f\|_1} =\f {1}{ 2^{d-1} d! \o_d}=\frac1{4^d \Ga(d/2+1)\pi^{d/2}}.
 \end{align*}

To show  the upper estimate,
\begin{equation}\label{5-4}
 \mathcal{L}^+\leq  \frac1{4^d \Ga(d/2+1)\pi^{d/2}},
  \end{equation}
  we need  the following Markov type quadrature formula, which was established in \cite{GI15}:
 \begin{lem}\label{lem-5-1}\cite{GI15} Assume that  $\al\ge -\f12$ and $\tau>0$. Let
 $\mathcal{B}_{\al,\tau}$ denote the set of all even entire functions  $f$  of exponential type  $\leq 2\tau$  such that
$\int_{0}^{\infty}|f(t)|t^{2\alpha+1}\,dt<\infty$.
 Then there exists a sequence $\{\rho_k\}_{k=0}^\infty$ of positive numbers with $\rho_{0}=2^{2\alpha}(\Gamma(\alpha+1))^2(2\alpha+2)/\tau^{2\alpha+2}$ such that
\begin{equation*}
\int_{0}^{\infty}f(t)t^{2\alpha+1}\,dt=
\rho_{0}f(0)+\sum_{k=1}^{\infty}\rho_{k}f(q_{\alpha+1,k}/\tau),\   \   \forall f\in\mathcal{B}_{\al,\tau},
\end{equation*}
where the infinite  series converges absolutely, and  $\{q_{\al+1, k}\}_{k=1}^\infty$ is the sequence of all positive zeros of the Bessel function  $J_{\al+1}(x)$ arranged in increasing order.
\end{lem}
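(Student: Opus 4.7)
The plan is to recognize Lemma~\ref{lem-5-1} as a Bessel-type Poisson summation identity, derived from a Fourier--Bessel expansion of the Hankel transform of $f$. With a suitable normalization, introduce the Hankel transform of order $\alpha$,
\[
\mathcal{H}_\alpha f(s) := \int_0^\infty f(t)\, j_\alpha(st)\, t^{2\alpha+1}\, dt,
\]
and observe that since $j_\alpha(0) = 1$, the quantity to be computed equals $\mathcal{H}_\alpha f(0)$. By the Paley--Wiener theorem for the Hankel transform (obtained from the classical Paley--Wiener theorem by identifying an even entire function of one variable with a radial entire function on $\RR^{2\alpha+2}$ when $2\alpha+2\in\NN$, and extending in $\alpha$ by analytic continuation), the assumption $f \in \mathcal{B}_{\alpha,\tau}$ implies that $g := \mathcal{H}_\alpha f$ is a continuous function compactly supported in $[0, \tau]$.

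The core step is to expand $g$ in the Fourier--Bessel series associated with the complete orthogonal system $\{\phi_k\}_{k\ge 0}$ on $L^2([0,\tau], s^{2\alpha+1}\,ds)$, where $\phi_0 \equiv 1$ and $\phi_k(s) = j_\alpha(q_{\alpha+1, k} s/\tau)$ for $k \ge 1$. These are the eigenfunctions of the singular Sturm--Liouville problem
\[
(s^{2\alpha+1} \phi')' + \lambda^2 s^{2\alpha+1} \phi = 0 \quad \text{on } [0, \tau],
\]
subject to boundedness at $s=0$ and the Neumann condition $\phi'(\tau) = 0$. Using the identity $j_\alpha'(x) = -\frac{x}{2(\alpha+1)}\, j_{\alpha+1}(x)$, the boundary condition reduces to $J_{\alpha+1}(\lambda\tau) = 0$, so the spectrum is exactly $\lambda_k = q_{\alpha+1, k}/\tau$, matching the nodes in the statement. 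Evaluating the expansion $g = \sum c_k \phi_k$ at $s = 0$ and noting that $\phi_k(0)=1$ for every $k$ gives $g(0) = \sum_{k\ge 0} c_k$; by the Hankel inversion formula each coefficient $c_k$ for $k\ge 1$ is a positive multiple of $f(q_{\alpha+1, k}/\tau)$, while $c_0$ is proportional to $f(0)$.

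The weights $\rho_k$ are products of the reciprocal $L^2$-norms $h_k^{-1}=\bigl(\int_0^\tau \phi_k^2(s) s^{2\alpha+1}\,ds\bigr)^{-1}$ with the Hankel inversion constant $(2^{2\alpha}\Gamma(\alpha+1)^2)^{-1}$. A direct calculation gives $h_0 = \tau^{2\alpha+2}/(2\alpha+2)$, producing exactly the stated $\rho_0 = 2^{2\alpha}(\Gamma(\alpha+1))^2(2\alpha+2)/\tau^{2\alpha+2}$. For $k\ge 1$, the standard Bessel identity
\[
\int_0^\tau [J_\alpha(\lambda_k s)]^2 \, s\, ds = \tfrac{\tau^2}{2}\,[J_\alpha(q_{\alpha+1, k})]^2,
\]
valid precisely because $J_{\alpha+1}(\lambda_k \tau) = 0$ (so that the boundary term from integration by parts vanishes), together with the conversion from $J_\alpha$ to the normalized $j_\alpha$, yields $h_k$ proportional to $[j_\alpha(q_{\alpha+1, k})]^2$ and hence positive weights $\rho_k > 0$.

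The main obstacle is justifying the pointwise validity of the Fourier--Bessel expansion at the singular endpoint $s = 0$ (a priori the series converges only in $L^2$-sense) and establishing absolute convergence of the resulting quadrature series. This is handled by a density/approximation argument: first establish the formula for the dense subclass of $f$ such that $g = \mathcal{H}_\alpha f$ is smooth and vanishes in a neighbourhood of $s = \tau$ (for which termwise pointwise convergence follows from uniform convergence of the series), and then extend to arbitrary $f \in \mathcal{B}_{\alpha, \tau}$ by a limiting procedure, using decay estimates on $|f(q_{\alpha+1, k}/\tau)|$ of order $k^{-(2\alpha+2)}$ obtained via the Phragm\'en--Lindel\"of principle applied to the entire extension of $f$ and its integrability against $t^{2\alpha+1}$.
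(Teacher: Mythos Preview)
The paper does not prove this lemma; it is quoted from Gorbachev--Ivanov \cite{GI15} and used as a black box, so there is no in-paper argument to compare against.

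That said, your outline has a genuine gap at the Paley--Wiener step. If $f$ is an even entire function of exponential type at most $2\tau$, then $g=\mathcal{H}_\alpha f$ is supported in $[0,2\tau]$, not in $[0,\tau]$. For $\alpha=-\tfrac12$ this is just the classical Paley--Wiener theorem; e.g.\ $f(t)=(\sin(\tau t)/(\tau t))^{2}$ has cosine transform equal to a triangle on $[0,2\tau]$. With the correct support $[0,2\tau]$, your identification of the Fourier--Bessel coefficients of $g$ on $[0,\tau]$ with the values $f(q_{\alpha+1,k}/\tau)$ via Hankel inversion fails, because the inversion integral must run over the whole support $[0,2\tau]$; in the same example $f(k\pi/\tau)=0$ for every $k\ge 1$, yet the cosine coefficients of $g|_{[0,\tau]}$ are nonzero for odd $k$. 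If instead you expand $g$ on $[0,2\tau]$ with the Neumann condition, the nodes become $q_{\alpha+1,k}/(2\tau)$, which are not the ones in the statement. The passage from type $\tau$ to type $2\tau$---the analogue of Radau exactness for degree $2N$ rather than $N$---therefore needs an additional ingredient; in \cite{GI15} it is obtained through the Gauss/Markov quadrature construction and the associated interpolation identity for the Bessel Sturm--Liouville problem, not by a single eigenfunction expansion. A minor point: the weights come out as $\rho_k=c_\alpha^{-1}h_k^{-1}$ with $c_\alpha^{-1}=2^{2\alpha}(\Gamma(\alpha+1))^{2}$, so the Hankel constant in your description is inverted.
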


Now we turn to the proof of the estimate \eqref{5-4}. Given $\va\in (0,1)$,
let $f\in\mathcal{E}_{1}^{d}$ be a nonnegative function  such that $\|f\|_{1}=1$ and $\|f\|_\infty\ge \mathcal{L}^+-\va$.
Without loss of generality, we may assume that
$\|f\|_{\infty}=f(0)$. Define a nonnegative  radial function $g$ by
\[
g(x)=g_0(|x|):=\f 1{\o_{d-1}}\int_{\SS^{d-1}}f(|x|\xi)
\,d\sigma(\xi),\qquad x\in\RR^d.
\]
It is easily seen that
$$\wh{g}(x):=\f 1{\o_{d-1}}\int_{\SS^{d-1}}\wh{f}(|x|\xi)
\,d\sigma(\xi),\qquad x\in\RR^d,$$
 $g(0)=f(0)$, and
$
\|g\|_{1}= \|f\|_{1}=1.
$
By the Paley-Wiener theorem, this in particular implies  $g\in\mathcal{E}_1^d$.
Thus, we may
apply  Lemma \ref{lem-5-1}  to
the function $g_0$ with  $\tau=1/2$ and $\al=d/2-1$.   Taking into account the facts that $\rho_j\ge 0$ for $j=0,1,\cdots$ and $g_0$ is nonnegative, we then  obtain
\[
1=\|g\|_{1}=\omega_{d-1}\int_{0}^{\infty}g_0(t)t^{d-1}\,dt\ge
\o_{d-1}\rho_{0}g(0)=\omega_{d-1}2^{2d-2}d
(\Gamma(d/2))^2  f(0).
\]
Thus,
\begin{align*}
   \mathcal{L}^+-\va\leq  f(0) &\leq \f 1{\o_{d-1} 2^{2d-2} d (\Ga(d/2))^2}=\f 1{d! 2^{d-1} \o_d}=\frac1{4^d \Ga(d/2+1)\pi^{d/2}}.
\end{align*}
Letting $\va\to 0$ yields the desired estimate \eqref{5-4}.
\subsection{Proof of Proposition \ref{prop-5-2}}
Without loss of generality, we may assume $n=2k$. (The case $n=2k+1$ can be treated similarly). The proof follows along the same line as that of Proposition \ref{prop-5-1}.

To  show the lower estimate, \begin{equation}\label{lower}
\sup_{0\leq P\in\Pi_{2k}^d} \frac{\|P\|_\infty}{ \|P\|_1}\ge \o_d^{-1}
\f{(2k+d)(k+d-1)!}{k! d!},
\end{equation}
 we consider the polynomial
$$ f(x):=\Bl[ R_k^{(\f d2, \f{d-2}2)}(x\cdot e)\Br]^2,\  \ x\in\SS^d,$$
where $e\in\SS^d$ is a fixed point on $\SS^d$ and $R_k^{(\al,\b)}(t)=P_k^{(\al,\b)}(t)/P_k^{(\al,\b)}(1)$. Clearly,  $f\in \Pi_n^d$, and  $\|f\|_\infty =f(e)=1$. Moreover,  using \eqref{reproduce}, we have
$$ \|f\|_1=\int_{\SS^d} |R_k^{(\f d2, \f{d-2}2)}(x\cdot e)|^2\, d\s(x) =\f 1{d_k^2} \sum_{j=0}^k \f {\text{dim} \, \mathcal{H}_j^d}{\og_d}=\f {\og_d} {\text{dim}\, \Pi_k^d}.$$
It then follows from \eqref{dimension} that
\begin{align*}
  \f{\|f\|_\infty}{\|f\|_1} =\f {\text{dim}\, \Pi_k^d} {\og_d}=\f 1{\og_d} \f {(2k+d)\Ga(k+d)}{k! d!},
\end{align*}
which shows the lower estimate \eqref{lower}.

The proof of  the upper estimate, \begin{equation}\label{upper-0}
\sup_{0\leq P\in\Pi_{2k}^d} \frac{\|P\|_\infty}{ \|P\|_1}\leq  \o_d^{-1}
\f{(2k+d)(k+d-1)!}{k! d!},
\end{equation}
 relies on the following Jacobi-Gauss-Radau quadrature rules, which can be found in \cite[p. 81]{STW}:
\begin{lem}\label{Jacobi-Gauss-Radau} \cite{STW}  Let $\{x_j\}_{j=1}^N$ be the zeros of the Jacobi polynomial $P_N^{(\a+1, \be)}$ with   $\al,\b>-1$.   Then for every algebraic polynomial $P$ of degree at most $2N$,
\begin{equation*}
   \int_{-1}^1 P(x) (1-x)^\al(1+x)^\b\, dx =\ld_0 P(1) +\sum_{j=1}^N \ld_j P(x_j),
\end{equation*}
where
\begin{align*}
  \ld_0&=\f {2^{\al+\b+1} (\al+1) (\Ga (\al+1))^2 N! \Ga(N+\b+1)}{\Ga(N+\al+2)\Ga(N+\al+\be+2)},\\
  \ld_j&=\f { 2^{\al+\b+4} \Ga(N+\al+2) \Ga(N+\b+1)}{ (1+x_j)(1-x_j)^2[P_{N-1}^{(\al+2, \b+1)}(x_j)]^2 (N+\al+\b+2) \Ga(N+\al+\b+3)}.
\end{align*}
\end{lem}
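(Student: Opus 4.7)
This is the classical $(N+1)$-point Gauss--Radau quadrature formula with a prescribed node at $x=1$ for the Jacobi weight $w(x):=(1-x)^\al(1+x)^\be$. The plan has two stages: first establish exactness on polynomials of degree at most $2N$; then derive the explicit weights by plugging in carefully chosen degree-$2N$ test polynomials and invoking classical Jacobi-polynomial identities.

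For exactness, I set $\pi_N(x):=\prod_{j=1}^{N}(x-x_j)$, which is a scalar multiple of $P_N^{(\al+1,\be)}(x)$ because the $x_j$ are precisely its zeros. Given $P$ with $\deg P\le 2N$, polynomial division yields
\[
P(x)=H(x)+(x-1)\pi_N(x)S(x),
\]
with $\deg S\le N-1$ and $\deg H\le N$, where $H$ is the unique polynomial of degree $\le N$ that agrees with $P$ at the $N+1$ nodes $1,x_1,\dots,x_N$. Integrating against $w(x)$, the correction term becomes $-\int_{-1}^{1}\pi_N(x)S(x)(1-x)^{\al+1}(1+x)^\be\,dx$, which vanishes by orthogonality of $P_N^{(\al+1,\be)}$ against polynomials of degree $<N$ under the shifted Jacobi weight $(1-x)^{\al+1}(1+x)^\be$. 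Hence $\int_{-1}^{1}P(x)w(x)\,dx=\int_{-1}^{1}H(x)w(x)\,dx$; expanding $H$ in the Lagrange basis $\ell_0,\ell_1,\dots,\ell_N$ adapted to the $N+1$ nodes produces the quadrature with universal weights $\ld_j=\int_{-1}^{1}\ell_j(x)w(x)\,dx$.

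To extract closed forms, I would apply exactness on well-chosen test polynomials of degree $2N$. The function $[\pi_N(x)/\pi_N(1)]^2$ equals $1$ at $x=1$ and vanishes at each $x_j$, so
\[
\ld_0=\frac{1}{\pi_N(1)^2}\int_{-1}^{1}[\pi_N(x)]^2(1-x)^\al(1+x)^\be\,dx.
\]
For $\ld_j$ with $j\ge 1$, the Lagrange polynomial is $\ell_j(x)=(x-1)\pi_N(x)/[(x-x_j)(x_j-1)\pi_N'(x_j)]$, and $\ell_j(x)^2$ has degree $2N$, takes the value $1$ at $x_j$, and vanishes at all other nodes including $1$. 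Applying exactness to $\ell_j^2$ gives
\[
\ld_j=\frac{1}{[(x_j-1)\pi_N'(x_j)]^2}\int_{-1}^{1}\frac{[\pi_N(x)]^2}{(x-x_j)^2}(1-x)^{\al+2}(1+x)^\be\,dx.
\]
Each integral reduces to standard Jacobi machinery via: the $L^2$-normalization $\int_{-1}^{1}[P_N^{(\al+1,\be)}(x)]^2(1-x)^{\al+1}(1+x)^\be\,dx=\frac{2^{\al+\be+2}\Ga(N+\al+2)\Ga(N+\be+1)}{(2N+\al+\be+2)N!\Ga(N+\al+\be+2)}$; the boundary value $P_N^{(\al+1,\be)}(1)=\Ga(N+\al+2)/[N!\Ga(\al+2)]$; the leading-coefficient formula for $P_N^{(\al+1,\be)}$; and the differentiation identity $(P_N^{(\al+1,\be)})'(x)=\tfrac{1}{2}(N+\al+\be+2)P_{N-1}^{(\al+2,\be+1)}(x)$, which converts $\pi_N'(x_j)$ into a multiple of $P_{N-1}^{(\al+2,\be+1)}(x_j)$.

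The main obstacle is purely algebraic bookkeeping. For $\ld_0$, one has to pass between the weight $(1-x)^\al$ appearing in the integral and the orthogonality weight $(1-x)^{\al+1}$ governing the $L^2$-normalization, which can be done by an integration-by-parts reduction (or by expressing $\pi_N^2$ as an explicit combination of $P_k^{(\al,\be)}$-polynomials and integrating term by term). For $\ld_j$, the extra factor $(x-1)^2=(1-x)^2$ produced by $\ell_j^2$ combines with $(1-x)^\al$ to give exactly the weight $(1-x)^{\al+2}(1+x)^\be$ under which $P_{N-1}^{(\al+2,\be+1)}$ is orthogonal, so the Christoffel-function machinery yields a closed form evaluated at $x=x_j$. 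Assembling all the Gamma factors then produces the stated $\ld_0$ and $\ld_j$. All these ingredients are classical, which is why \cite{STW} presents the result as a citable reference rather than re-deriving it.
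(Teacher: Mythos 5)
The paper does not actually prove this lemma: it is quoted verbatim as a known $(N{+}1)$-point Jacobi--Gauss--Radau rule with fixed node at $x=1$ and is justified only by the citation to \cite[p.~81]{STW}. Your sketch is the standard derivation underlying that reference, and its skeleton is sound: dividing $P$ by $(x-1)\pi_N(x)$ with $\pi_N\propto P_N^{(\alpha+1,\beta)}$, absorbing $(x-1)(1-x)^{\alpha}=-(1-x)^{\alpha+1}$ so that the remainder term dies by orthogonality under the shifted weight, and then reading off $\lambda_0=\int_{-1}^1[\pi_N(x)/\pi_N(1)]^2(1-x)^{\alpha}(1+x)^{\beta}\,dx$ and $\lambda_j=\int_{-1}^1\ell_j^2\,w$ (which also gives the positivity the paper actually uses). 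The stated Gamma-factor expressions are consistent with this scheme (e.g.\ for $\alpha=\beta=0$, $N=1$ they give $\lambda_0=1/2$, $\lambda_1=3/2$, the correct Radau weights with fixed node $+1$). The only place where your write-up stops short is exactly the ``algebraic bookkeeping'' you defer: for $\lambda_0$ you must genuinely bridge the gap between the integration weight $(1-x)^{\alpha}$ and the orthogonality weight $(1-x)^{\alpha+1}$ of $P_N^{(\alpha+1,\beta)}$, e.g.\ via the expansion of $P_N^{(\alpha+1,\beta)}$ in the $P_k^{(\alpha,\beta)}$ or a contiguous relation for $(1-x)P_N^{(\alpha+1,\beta)}$; and for $\lambda_j$ a cleaner route than squaring is to integrate $\ell_j$ itself, since the factor $(x-1)$ again converts the weight to $(1-x)^{\alpha+1}(1+x)^{\beta}$ and exhibits $\lambda_j=(1-x_j)^{-1}\mu_j$, where $\mu_j$ is the classical Gauss--Jacobi weight for that shifted weight, whose closed form (via the derivative identity $\frac{d}{dx}P_N^{(\alpha+1,\beta)}=\tfrac12(N+\alpha+\beta+2)P_{N-1}^{(\alpha+2,\beta+1)}$ and the norm and endpoint values you list) yields the displayed formula directly. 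These remaining steps are routine and classical, so your proposal is an acceptable reconstruction of the cited result rather than a divergence from the paper, which offers no proof of its own.
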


To show \eqref{upper-0}, let $f$ be an arbitrary nonnegative spherical polynomial of degree at most $2k$   such that $\|f\|_\infty =f(x_0)=1$ for some $x_0\in\SS^d$. Without loss of generality, we may assume that $x_0=(1,0,\cdots, 0)$. Define
$$ g(t):=\f 1{\og_{d-1}}\int_{\sph} f(t, \sqrt{1-t^2} \xi)\, d\s(x),\  \ t\in[-1,1].$$
It is easily seen that $g$ is an algebraic polynomial of degree at most $2k$ on $[-1,1]$, $g(1)=f(x_0)=1$, and
$$\int_{-1}^1 g(t) (1-t^2)^{\f {d-2}2}\, dt
 =\f 1{\og_{d-1}}\int_{-1}^1 \int_{\sph} f(t, \sqrt{1-t^2}\xi)\, d\s(\xi) (1-t^2)^{\f{d-2}2}\, dt=\f 1{\og_{d-1}}\|f\|_{L^1(\SS^d)}.$$
 Using Lemma \ref{Jacobi-Gauss-Radau} with $\al=\b=\f{d-2}2$, and taking into account the facts that $g$ is nonnegative and $\ld_j>0$ for $j=0,1,\cdots$, we deduce
 \begin{align*}
   \|f\|_{L^1(\SS^d)}& =\og_{d-1}\int_{-1}^1 g(t) (1-t^2)^{\f {d-2}2}\, dt \ge \ld_0 \og_{d-1} g(1) \\
   &= \og_{d-1} \f { 2^{d-2} d (\Ga(d/2))^2 k!}{(k+d/2) \Ga(k+d)}=\og_d \f { k! d!} { (2k+d) \Ga(k+d)}.
 \end{align*}
 Thus,
 $$\f{\|f\|_\infty}{\|f\|_{L^1(\SS^d)}} \leq \og_d\f {(2k+d) \Ga(k+d)}{k! d!},$$
 and the upper estimate \eqref{upper-0} then follows.

\end{document}